\theoremstyle{plain}
\newtheorem*{thmA}{Theorem A}
\newtheorem*{thmB}{Theorem B}
\newtheorem{thm}{Theorem}[section]
\newtheorem{lem}[thm]{Lemma}
\newtheorem{pro}[thm]{Proposition}
\theoremstyle{definition}
\newtheorem{dfn}[thm]{Definition}
\newtheorem{exa}[thm]{Example}
\begin{document}

\title{On ramification structures for finite nilpotent groups}

\author[\c{S}.\ G\"ul]{\c{S}\"ukran G\"ul}
\address{Department of Mathematics\\ University of the Basque Country UPV/EHU\\
 48080 Bilbao, Spain}
\email{sukran.gul@ehu.eus}

\keywords{Ramification structures, Beauville structures, finite nilpotent groups, finite $p$-groups. \vspace{3pt}}
\subjclass[2010]{20D15, 14J29.}

\thanks{The author is supported by the Spanish Government, grant MTM2014-53810-C2-2-P, and by the Basque Government, grant IT974-16}.

\begin{abstract}
We extend the characterization of abelian groups with ramification structures given by Garion and Penegini in \cite{GP2} to finite nilpotent groups whose Sylow $p$-subgroups have a `nice power structure', including regular $p$-groups, powerful $p$-groups and (generalized) $p$-central $p$-groups. We also correct two errors in \cite{GP2} regarding abelian $2$-groups with ramification structures and the relation between the sizes of ramification structures for an abelian group and those for its Sylow $2$-subgroup.
\end{abstract}

\maketitle

\section{Introduction}

An algebraic surface $S$ is said to be \emph{isogenous to a higher product of curves} if it is isomorphic to $(C_1\times C_2)/G$, where $C_1$ and $C_2$ are curves of genus at least $2$, and $G$ is a finite group acting freely on $C_1\times C_2$. Particular interesting examples of such surfaces are \emph{Beauville surfaces}. These are algebraic surfaces isogenous to a higher product which are rigid.

Groups of surfaces isogenous to a higher product can be characterized by a purely group-theoretical condition: the existence of a `ramification structure'.

%A surface isogenous to a higher product is either of \emph{mixed} or \emph{unmixed} type according as $G$ acts by exchanging the two factors  or diagonally on their product, respectively. In this paper we restrict our considerations to the unmixed case.

\begin{dfn}
Let $G$ be a finite group and let $T=(g_1, g_2, \dots, g_r)$ be a tuple of non-trivial elements of $G$.
\begin{enumerate}
\item  
$T$ is called a \emph{spherical system of generators} of $G$ if $\langle g_1, g_2, \dots, g_r\rangle=G$ and $g_1g_2\dots g_r=1$.
\item 
$\Sigma(T)$ is the union of all conjugates of the cyclic subgroups generated by the elements of $T$:
\[
\Sigma(T)
=
\bigcup_{g\in G} \bigcup_{i=1}^{r} \,
\langle g_i \rangle^g.
\]
Two tuples $T_1$ and $T_2$ are called \emph{disjoint} if 
$
\Sigma(T_1) \cap \Sigma(T_2)=1.
$
\end{enumerate}
\end{dfn}

\begin{dfn}
\label{dfn}
An \emph{(unmixed) ramification structure} of size $(r_1,r_2)$ for a finite group $G$ is a pair $(T_1, T_2)$ of disjoint spherical systems of generators of $G$, where $|T_1|=r_1$ and $|T_2|=r_2$. We denote by $S(G)$ the set of all sizes $(r_1, r_2)$ of ramification structures of $G$.
\end{dfn}

Observe that if $d$ is the minimum number of generators of $G$, spherical systems of generators of $G$ are of size at least $d+1$. Since clearly cyclic groups do not admit ramification structures, it follows that $r_1, r_2 \geq 3$ in Definition \ref{dfn}.

If $r_1=r_2=3$, then ramification structures coincide with \emph{Beauville structures}, which have been intensely studied in recent times; see surveys \cite{bos, fai, jon}.
Knowledge about ramification structures that are not Beauville is very scarce. In 2013, Garion and Penegini \cite{GP} proved that almost all alternating groups and symmetric groups admit such structures.
Soon afterwards, they characterized the abelian groups with ramification structures \cite[Theorem 3.18]{GP2}. 

After abelian groups, the most natural class of finite groups to consider are nilpotent groups. 
As we will see in Proposition \ref{nilpotent general}, a finite nilpotent group admits a ramification structure if and only if so do its Sylow $p$-subgroups. 
The goal of this paper is to extend the characterization of abelian groups with ramification structures to finite nilpotent groups whose Sylow $p$-subgroups have a good behavior with respect to powers. To this purpose, we first study the existence of ramifications structures for finite $p$-groups with a `nice power structure'. In particular,  we generalize Theorem A in \cite{FG}, which determines the conditions for such $p$-groups to be Beauville groups.

 If $G$ is a finite $p$-group of exponent $p^e$,  we call $G$ \textit{semi-$p^{e-1}$-abelian} if for every $x, y\in G$, we have
\begin{equation*}
\label{semi-abelian}
x^{p^{e-1}}=y^{p^{e-1}} \ \
\text{if and only if} \ \
(xy^{-1})^{p^{e-1}}=1.
\end{equation*}

\begin{thmA}
Let $G$ be a finite $p$-group of exponent $p^e$, and let $d=d(G)$. Suppose that $G$ is semi-$p^{e-1}$-abelian.
Then $G$ admits a ramification structure if and only if
$
|\{g^{p^{e-1}} \mid g \in G \}| \geq p^s
$
where $s=2$ if $p\geq3$ or  $s=3$ if $p=2$.
In that case, $G$ admits a ramification structure of size $(r_1, r_2)$ if and only if the following conditions hold:
\begin{enumerate}
\item
$r_1,r_2 \geq d+1$.
\item
If $p=3$ then $r_1, r_2 \geq 4$.
\item 
If $p=2$ then $r_1, r_2 \geq 5$.
\item
If $p=2$ and $|\{g^{2^{e-1}} \mid g \in G \}|=2^3$, then $(r_1,r_2)\neq (5,5)$, and furthermore if $e=1$, i.e. $G\cong C_2\times C_2 \times C_2$, then $r_1, r_2$ are not both odd.
\end{enumerate}
\end{thmA}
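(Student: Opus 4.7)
The plan is to reduce the question for $G$ to a combinatorial question on the ``top layer'' $G^{p^{e-1}}=\{g^{p^{e-1}}:g\in G\}$ and then invoke the characterization of \cite[Theorem 3.18]{GP2} for abelian groups. Set $\Omega=\{g\in G:g^{p^{e-1}}=1\}$. The semi-$p^{e-1}$-abelian hypothesis says exactly that $\Omega$ is a (characteristic) subgroup and that the $p^{e-1}$-th power map $\phi(g)=g^{p^{e-1}}$ descends to a bijection $G/\Omega\to G^{p^{e-1}}$; in particular $|G^{p^{e-1}}|=|G/\Omega|$, and $G/\Omega$ has exponent $p$. The crucial geometric observation is that for $g,h\in G$ of order $p^e$ one has $\langle g\rangle\cap\langle h\rangle=1$ if and only if $\langle\phi(g)\rangle\neq\langle\phi(h)\rangle$, and since $\phi(g^x)=\phi(g)^x$, disjointness of tuples consisting of order-$p^e$ elements translates into disjointness of the $G$-conjugacy closures of $\phi(T_1)$ and $\phi(T_2)$ inside $G^{p^{e-1}}$. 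A preliminary reduction, in the spirit of \cite{FG}, should allow me to restrict to tuples whose entries all have order $p^e$.

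For necessity, the bound $r_i\geq d+1$ is immediate from the spherical relation together with the generating condition. The small-$p$ bounds (ii), (iii) are obtained by projecting to $G/\Phi(G)\cong(\Z/p\Z)^d$ and invoking the parity/sum-zero obstructions of the abelian case treated in \cite{GP2}. The bound $|G^{p^{e-1}}|\geq p^s$ is then forced by a counting argument on $\F_p$-lines: each tuple $T_i$ must produce, via $\phi$, a non-empty conjugacy-closed collection of such lines, and the two collections must be disjoint; the number of lines available in a group of order less than $p^2$ (respectively $2^3$ when $p=2$) is too small to accommodate two such generating collections.

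For sufficiency, the plan is to carry out the construction at the top layer and then lift. First, using \cite[Theorem 3.18]{GP2} applied to an abelian quotient controlling $G^{p^{e-1}}$, I would produce spherical generating tuples $\bar T_1,\bar T_2$ of $G/\Omega$ of the prescribed sizes whose $\phi$-images have disjoint $G$-conjugacy closures in $G^{p^{e-1}}$. Each $\bar g_i$ is then lifted to an order-$p^e$ preimage $g_i\in G$ (available by the exponent and the semi-$p^{e-1}$-abelian structure), with the last entry of each tuple adjusted inside $\Omega$ so that $g_1\cdots g_r=1$. The Burnside basis theorem applied to $G/\Phi(G)$ ensures the lifted tuples still generate $G$, and disjointness is preserved by construction.

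The hard part will be condition (iv) for $p=2$. When $|G^{2^{e-1}}|=2^3$, the top layer contains only seven non-identity elements, so two disjoint conjugacy-closed generating collections of $\F_2$-lines there must exhaust essentially all of $\F_2^3\setminus\{0\}$; a direct combinatorial analysis of Fano-plane type shows that the required configuration cannot realise $(r_1,r_2)=(5,5)$. In the further subcase $e=1$, where $G\cong C_2\times C_2\times C_2$, the spherical relation imposes an $\F_2$-linear sum-zero condition on each tuple which rules out $r_1,r_2$ simultaneously odd. Verifying that no further sporadic exclusions arise in other configurations, and cleanly handling the boundary between $|G^{p^{e-1}}|=p^s$ and strictly larger top layers, is the most delicate bookkeeping of the proof.
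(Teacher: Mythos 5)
Your outline has a decisive gap exactly where the theorem differs from \cite[Theorem 3.18]{GP2}, namely in part (iv) for $p=2$, $e\geq 2$. The ``preliminary reduction to tuples all of whose entries have order $p^e$'' is not merely unproved, it is false. Indeed, if every entry of both tuples has order $2^e$, then no entry lies in $\Omega_{e-1}(G)$, and by the semi-$2^{e-1}$-abelian hypothesis disjointness passes to the quotient (this is Lemma \ref{induced rs}(i)); so the images form a ramification structure of the \emph{same} size $(r_1,r_2)$ for $G/\Omega_{e-1}(G)$. When $|\{g^{2^{e-1}}\mid g\in G\}|=2^3$ this quotient is $C_2\times C_2\times C_2$, which by Theorem \ref{elementary abelian} admits no structure with $r_1,r_2$ both odd. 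Hence under your reduction the sizes with $r_1,r_2$ both odd could never occur, whereas the theorem asserts they do occur whenever $e\geq 2$ and $(r_1,r_2)\neq(5,5)$ --- this is precisely the error of \cite{GP2} that the paper corrects. For the same reason your sufficiency scheme (produce structures ``of the prescribed sizes'' in $G/\Omega$ and lift) cannot reach these sizes: the quotient has none to lift. The paper's proof of Theorem \ref{case p=2} needs a genuinely different construction here: the second tuple is arranged to contain an element $n^{-1}\in\Omega_{e-1}(G)$ of order $<2^e$ (so its image in the quotient is trivial and the projected size drops to the even number $r_2-1$, evading the parity obstruction), with $n$ built from generators of $\Omega_{e-1}(G)$ modulo $G^2$ and with a case distinction on whether $n^{2^{k-1}}$ is a $2^{e-1}$-st power, in order to keep $\langle n\rangle$ disjoint from $\Sigma(T_1)$. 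Nothing in your proposal supplies this step, and your own discussion of (iv) only addresses the necessity side ($(5,5)$ excluded, and both-odd excluded when $e=1$), not the existence of both-odd structures when $e\geq 2$.

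Two further points. First, your necessity argument for (ii) and (iii) ``by projecting to $G/\Phi(G)$ and invoking the abelian case'' is not valid as stated: disjointness is not preserved under arbitrary quotients, and the semi-abelian hypothesis only lets you push it to $G/\Omega_{e-1}(G)$, which has exponent $p$ but need not be abelian when $p=3$ (for instance $G$ itself nonabelian of exponent $3$, where $\Omega_{e-1}(G)=1$). The paper needs a separate argument for exponent-$p$ groups (Theorem \ref{exponent p}), whose $p=3$ direction is a conjugacy computation in a group of order $3^3$, not a projection of the structure to the Frattini quotient. Second, leaning on \cite[Theorem 3.18]{GP2} is delicate, since the paper shows that statement is wrong for $2$-groups; for the elementary abelian quotients you need it is essentially correct, but it has to be reproved independently (as the paper does in Theorem \ref{elementary abelian}) rather than cited wholesale.
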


Note that the condition on the cardinality of the set $\{g^{p^{e-1}} \mid g \in G \}$ in Theorem A implies that if $G$ admits a ramification structure, then $d(G)\geq 2$ if $p\geq3$ or $d(G)\geq 3$ if $p=2$.

According to \cite[Theorem 3.18]{GP2}, if $G$ is an abelian $2$-group of exponent $2^e$ and $|G^{2^{e-1}}|=2^3$, then $G$ does not admit a ramification structure of size $(r_1,r_2)$ if $r_1$, $r_2$ are both odd. However, Theorem A shows that this statement is not true, and they can be both odd provided that $G\not\cong C_2\times C_2\times C_2$.

Theorem A applies to a wide family of $p$-groups, including regular $p$-groups (so, in particular, $p$-groups of exponent $p$ or of  nilpotency class less than $p$), powerful $p$-groups, and generalized $p$-central $p$-groups. A $p$-group is called generalized $p$-central if $p>2$ and $\Omega_1(G)\leq Z_{p-2}(G)$, or $p=2$ and $\Omega_2(G)\leq Z(G)$.

We want to remark that Theorem A is not valid for all finite $p$-groups. We will see that no condition on the cardinality of the set $\{g^{p^{e-1}} \mid g \in G \}$ can ensure the existence of ramification structures for the class of all finite $p$-groups.

On the other hand, if $G$ is a finite nilpotent group and $G_p$ is the Sylow $p$-subgroup of $G$, then we have $\bigcap_{p \mid |G| } S(G_p) \subseteq S(G)$, and  $S(G)\subseteq S(G_p)$ for odd primes $p$. However, it is not always true that $S(G)\subseteq S(G_2)$, even for abelian groups, contrary to what is implicit in the statement of Theorem 3.18 in \cite{GP2}. We give a counterexample to that in Example \ref{non-inherited size}. We fix this error in Theorem B.

\begin{thmB}
Let $G$ be a nilpotent group, and let $d=d(G)$. Let $G_p$ denote the Sylow $p$-subgroup of $G$ for every prime $p$ dividing $|G|$. Suppose that $\exp G_p=p^{e_p}$ and all $G_p$ are semi-$p^{e_p-1}$-abelian. Then $G$ admits a ramification structure if and only if all $G_p$ admit a ramification structure. In that case,  $(r_1, r_2) \in S(G)$ if and only if the following conditions hold:
\begin{enumerate}
\item 
$r_1, r_2 \geq d+1$.
\item
$(r_1, r_2) \in S(G_p)$ for $p$ odd.
\item
$(r_1, r_2) \in S(G_2)$ unless $G_2\cong C_2 \times C_2 \times C_2$.
\item
If $G_2\cong C_2 \times C_2 \times C_2$ then $r_1, r_2 \geq 5$ and $(r_1, r_2)\neq (5,5)$. Furthermore, if $G\cong C_2 \times C_2 \times C_2$ then $r_1, r_2$ are not both odd. 
\end{enumerate} 
\end{thmB}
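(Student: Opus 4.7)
The plan is to reduce everything to the direct product decomposition $G = \prod_p G_p$ that every finite nilpotent group enjoys, and then pass ramification structures between $G$ and its Sylow subgroups via two elementary compatibility lemmas. The \emph{combining lemma}: if for every $p \mid |G|$ the group $G_p$ has a ramification structure of size $(r_1, r_2)$, then pairing entries componentwise produces a ramification structure of $G$ of the same size, since the product relation and non-triviality hold componentwise, generation follows from coprime orders, and any $(x_p)_p \in \Sigma(T_1) \cap \Sigma(T_2)$ forces $x_p \in \Sigma(T_{1,p}) \cap \Sigma(T_{2,p}) = \{1\}$ in every factor. The \emph{projecting lemma}: if $(T_1, T_2)$ is a ramification structure of $G$ of size $(r_1, r_2)$, then $\pi_p(T_i)$ has product $1$ and generates $G_p$, and after dropping trivial entries yields a ramification structure of $G_p$ of some size $(s_{p,1}, s_{p,2})$ with $s_{p,i} \leq r_i$; disjointness lifts because for any $y \in G_p$ the element with $y$ in the $p$-component and identity elsewhere lies in $\Sigma(T_i)$ if and only if $y \in \Sigma(\pi_p(T_i))$.

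For necessity, (i) is the standard bound $r_i - 1 \geq d(G)$. Combining the projecting lemma with Theorem A applied to each $G_p$, and using that Theorem A's constraints are monotone in $r_i$ apart from the exclusion $(5, 5)$ at $G_2 \cong C_2 \times C_2 \times C_2$, I obtain (ii), (iii), and the $r_i \geq 5$, $(r_1, r_2) \neq (5, 5)$ parts of (iv): if $r_1 = r_2 = 5$ then $s_{2,i} = 5$ forces $(s_{2,1}, s_{2,2}) = (5, 5) \notin S(C_2 \times C_2 \times C_2)$, a contradiction. The final ``both odd'' clause of (iv) for $G \cong C_2 \times C_2 \times C_2$ is direct from Theorem A since in that case $G = G_2$. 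For sufficiency, if $G_2 \not\cong C_2 \times C_2 \times C_2$ then (ii) and (iii) together give $(r_1, r_2) \in S(G_p)$ for every $p$, and the combining lemma finishes the proof.

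The delicate sufficiency case is $G_2 \cong C_2 \times C_2 \times C_2$ together with non-trivial odd part $H = \prod_{p \text{ odd}} G_p$. Using (ii) and the combining lemma on the odd Sylows, I fix a ramification structure $(T_1^H, T_2^H)$ of $H$ of size $(r_1, r_2)$, and then build $(T_1, T_2)$ in $G = G_2 \times H$ by setting $g_{i,j} = (a_{i,j}, h_{i,j})$, where $h_{i,j}$ is the $j$-th entry of $T_i^H$ and $a_{i,j} \in G_2$ is to be chosen. Since $h_{i,j} \neq 1$, every $g_{i,j}$ is non-trivial regardless of $a_{i,j}$; because $H$ has odd order, a direct computation shows that disjointness in $G$ collapses to the combinatorial requirement that $U_i = \{a_{i,j} : a_{i,j} \neq 1\} \subseteq G_2 \setminus \{1\}$ be disjoint with each $U_i$ spanning $G_2$, the $H$-side disjointness being inherited from $(T_1^H, T_2^H)$. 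A case analysis on the multiplicities $n(a) = |\{j : a_{i,j} = a\}|$, subject to $\sum_a n(a) a = 0$ in the $\F_2$-vector space $G_2$, shows that $r_i = 5$ forces $|U_i| \geq 4$, while $r_i \geq 6$ permits $|U_i| = 3$. Since $|G_2 \setminus \{1\}| = 7$, disjoint spanning $U_1, U_2$ exist precisely when $(r_1, r_2) \neq (5, 5)$, which also proves the exclusion of $(5, 5)$ in the necessity direction.

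The main obstacle is this construction in the $G_2 \cong C_2 \times C_2 \times C_2$ case: the combining lemma fails because $G_2$ alone admits no ramification structure of certain admissible sizes (e.g.\ the two-odd ones), so one must use the odd part $H$ to absorb trivial $G_2$-components; the combinatorial bookkeeping required to show that enough room remains in the seven non-trivial elements of $G_2$ for the desired disjoint spanning sets constitutes the technical heart of the argument.
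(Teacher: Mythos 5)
Your proof is correct, and its skeleton --- projecting a ramification structure of $G$ onto each Sylow subgroup, combining structures of the Sylow subgroups componentwise using coprimality, and then invoking Theorem A --- is the same as the paper's, which packages the two operations as Proposition 3.1. The differences are concentrated in two places. For the necessity of (ii), the paper uses the extra clause of its projection result (for an odd-order factor the size is preserved \emph{exactly}, by padding the shortened tuple back up with pairs $z_1,z_1^{-1}$ and the trick $z_1^2,z_1^{-1}$), whereas you project down to some size $(s_1,s_2)\le (r_1,r_2)$ and recover the exact size from the upward monotonicity of Theorem A's conditions for odd $p$; both are valid. More substantially, in the delicate sufficiency case $G_2\cong C_2\times C_2\times C_2$ with nontrivial odd part, the paper takes a ramification structure of $G_2$ of size $(r_1,r_2-1)$ (available by Theorem 2.3 once one may assume $r_2\ge 7$) and combines it with an $(r_1,r_2)$-structure of the odd part via the unequal-size version of the combining lemma, sizes merging to the maximum by padding with identities; you instead build the structure by hand, choosing the $C_2^3$-components $a_{i,j}$ and proving the small combinatorial fact that disjoint spanning subsets of the seven nonzero vectors of $\F_2^3$, carrying zero-sum multiplicity vectors of lengths $r_1,r_2\ge 5$, exist exactly when $(r_1,r_2)\neq(5,5)$. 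Your route is more self-contained and makes the $(5,5)$ obstruction transparent; the paper's is shorter because it reuses Theorem 2.3 and the max-size combining trick, which your equal-size combining lemma does not provide. Two cosmetic points: your closing remark that the combinatorial analysis ``also proves the exclusion of $(5,5)$'' in the necessity direction is overstated, since it only shows that your particular product construction fails --- but this causes no gap, as you had already derived $(r_1,r_2)\neq(5,5)$ correctly from the projecting lemma together with Theorem A; and the sufficiency in the residual case $G=G_2\cong C_2\times C_2\times C_2$ should be stated explicitly, though it is immediate from Theorem A.
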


\vspace{10pt}

\noindent
\textit{Notation.\/}
If $G$ is a finitely generated group, we write $d(G)$ for the minimum number of generators of $G$.
If $p$ is a prime and $G$ is a finite $p$-group, then $G^{p^i}=\langle g^{p^i} \mid g \in G \rangle$ and $\Omega_i(G)=\langle g \in G \mid g^{p^i}=1\rangle$. The exponent of $G$, denoted by $\exp G$, is the maximum of the orders of all elements of $G$.

\section{Finite $p$-groups}
Throughout this paper all groups will be finite.
In this section, we give the proof of Theorem A. Let us start with a general result related to lifting a spherical generating set of a factor group to the whole group.

\begin{pro}
\label{lifting process}
Let $G$ be a finite group and let $d=d(G)$. Suppose that  $N\unlhd G$ and $U=(\overline{x_1}, \dots, \overline{x_r})$ is a tuple of generators of $G/N$. Then the following hold:

\begin{enumerate}
\item 
If $r\geq d$ then there exist $n_1, \dots, n_r \in N$ such that  the tuple $T=( x_1n_1, \dots, x_rn_r)$ generates $G$.
\item
If $N\neq 1$, $r\geq d+1$ and $\overline{x_1}\dots\overline{x_r}=\overline{1}$, then we can choose $T$ to be a spherical system of generators of $G$.
\end{enumerate} 
\end{pro}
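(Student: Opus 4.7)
The plan is to derive part (i) from the classical Gasch\"utz lemma and then use it, together with a corrective factor in the last entry, to obtain part (ii).

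For (i), the statement is exactly Gasch\"utz's lemma: whenever $N\unlhd G$ and $\overline{x_1},\ldots,\overline{x_r}$ generate $G/N$ with $r\geq d(G)$, there exist $n_1,\ldots,n_r\in N$ such that $\langle x_1n_1,\ldots,x_rn_r\rangle=G$. I would simply cite this from a standard reference on finite groups rather than reprove the usual inductive reduction to minimal normal subgroups, since the main novelty of the proposition lies in (ii).

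For (ii), the key observation is that the hypothesis $\overline{x_1}\cdots\overline{x_r}=\overline{1}$ gives $\overline{x_r}=(\overline{x_1}\cdots\overline{x_{r-1}})^{-1}$, so the shorter tuple $(\overline{x_1},\ldots,\overline{x_{r-1}})$ already generates $G/N$. Since $r-1\geq d$, I apply part (i) to this shorter tuple to obtain $n_1,\ldots,n_{r-1}\in N$ with $\langle x_1n_1,\ldots,x_{r-1}n_{r-1}\rangle=G$, and then simply define
\[
n_r \;=\; x_r^{-1}\bigl(x_1n_1\cdots x_{r-1}n_{r-1}\bigr)^{-1}.
\]
A direct verification shows $n_r\in N$, since its image in $G/N$ is $\overline{x_r}^{-1}(\overline{x_1}\cdots\overline{x_{r-1}})^{-1}=\overline{1}$; by construction $(x_1n_1)\cdots(x_rn_r)=1$; and generation of $G$ is automatic because the first $r-1$ factors already generate $G$.

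The remaining point, and the only real obstacle, is to ensure every $x_in_i$ is non-trivial so that $T$ qualifies as a spherical system in the sense of Definition~\ref{dfn}. Whenever $\overline{x_i}\neq\overline{1}$, the element $x_in_i$ lies outside $N$ and is therefore non-trivial. For indices $i<r$ with $\overline{x_i}=\overline{1}$, we may replace $n_i$ by any other element of $N$ without disturbing the generation of $G$, and since $|N|\geq 2$ we can always avoid the single value making $x_in_i=1$. If the forced value of $n_r$ happens to yield $x_rn_r=1$, we redistribute a non-trivial element of $N$ between $n_r$ and one of the earlier $n_i$ (using normality of $N$ to keep the adjustment inside $N$), restoring both the product relation and the non-triviality of every entry. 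This final manoeuvre is precisely where the hypothesis $N\neq 1$ enters; everything else in the argument reduces to Gasch\"utz plus the one-line formula for $n_r$.
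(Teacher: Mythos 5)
Your argument is correct and essentially the paper's: part (i) is the Gasch\"utz-type lifting lemma (the paper cites it from Ribes--Zalesskii), and for (ii) both you and the paper apply (i) to the first $r-1$ entries, which generate $G/N$ because the product of the images is trivial, and then force the last entry to be the inverse of the product of the lifted ones. The only divergence is how non-triviality of that last entry is secured -- the paper reorders so that $\overline{x_r}\neq\overline{1}$ and treats the case where all $\overline{x_i}=\overline{1}$ separately, while you perturb by a non-trivial element $m\in N$ when the forced entry is trivial; this works, but you should add the one-line check that $m$ (or the index $j$ receiving it) can be chosen so that the modified entry $z_jm$ is still non-trivial, which is possible except in the degenerate situation $G=N\cong C_2$ with $r$ odd, where no spherical system of that size exists at all, so the statement itself (and equally the paper's own treatment of the case $G=N$) silently excludes it and it never arises in the paper's applications.
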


\begin{proof}
(i) See Proposition 2.5.4 in \cite{RZ}.

(ii) Assume first that $\overline{x_i}\neq \overline{1}$ for some $i=1, \dots, r$. For simplicity, we suppose that $\overline{x_r}\neq \overline{1}$. The equality $\overline{x_1}\dots\overline{x_r}=\overline{1}$ implies that $\langle \overline{x_1}, \dots, \overline{x_{r-1}} \rangle=G/N$.
Since $r-1 \geq d$ then by (i), there is a tuple $V=(z_1, \dots, z_{r-1})$ that generates $G$, where $z_i \in x_iN$ for $1\leq i \leq r-1$. Note that if  $\overline{x_j}=\overline{1}$, then it may happen that $z_j=1$. If this is the case, we take a nontrivial element in $N$ as $z_j$. Thus, $z_i \neq 1$ for $1\leq i \leq r-1$.

 If we call 
\[
T=\big(z_1, \dots, z_{r-1}, (z_1\dots z_{r-1})^{-1} \big),
\]
then clearly $T$ is a spherical system of generators of $G$. The only thing we have to show is that $(z_1\dots z_{r-1})^{-1} \in x_rN$. Observe that in $G/N$, we have $(\overline{z_1}\dots \overline{z_{r-1}})^{-1}=\overline{x_r}(\overline{z_1}\dots \overline{z_{r-1}} \ \overline{x_r})^{-1}=\overline{x_r}(\overline{x_1}\dots \overline{x_{r-1}} \ \overline{x_r})^{-1}=\overline{x_r}$. Thus, $(z_1\dots z_{r-1})^{-1} \in x_rN$. Since $\overline{x_r}\neq \overline{1}$,  this implies that $z_1\dots z_{r-1} \neq 1$.

Now suppose that $\overline{x_i}=\overline{1}$ for all $1\leq i \leq r$. Then $\overline{G}=\overline{1}$, and since $r\geq d+1$, we can take any spherical system of generators $T$ of $G$ of size $r$.
\end{proof}

Notice that in  part (ii) of Proposition \ref{lifting process}, we do not require that $U$ is a spherical system of generators of $G/N$. Therefore, as appears in the proof, some of $\overline{x_i} \in U$ might be  the identity of $G/N$.

We next state a theorem characterizing the possible sizes of ramification structures of elementary abelian $p$-groups. Before that we need the following lemma.

\begin{lem}
\label{basic operations}
Let $G$ be an elementary abelian $p$-group of rank $d$ with a ramification structure of size $(r_1, r_2)$. Then the following hold:
\begin{enumerate}
\item
$G$ admits a ramification structure of size $(r_1+1, r_2)$  if $p$ is odd, and of size $(r_1+2, r_2)$ if $p=2$.
\item
If $G^{*}$ is elementary abelian of rank $d+1$ and $r_1,r_2\geq d+2$, then $G^{*}$ admits a ramification structure of size $(r_1,r_2)$.
\end{enumerate}
\end{lem}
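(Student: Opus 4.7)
The plan is to handle (i) by enlarging $T_1$ while leaving $T_2$ untouched, inserting only elements that already lie in $\Sigma(T_1)$ so that disjointness comes for free. When $p$ is odd, $\langle g_{r_1}\rangle$ has $p\geq 3$ elements, so I can pick a nontrivial $a\in\langle g_{r_1}\rangle$ with $a\neq g_{r_1}^{-1}$ and replace $g_{r_1}$ by the two entries $g_{r_1}a$ and $a^{-1}$; the new tuple has size $r_1+1$, its product is unchanged, the new entries are nontrivial, generation is retained because $(g_{r_1}a)(a^{-1})=g_{r_1}$, and $\Sigma$ does not grow since both new entries lie in $\langle g_{r_1}\rangle$. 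When $p=2$ the splitting collapses, so instead I will simply append the pair $(g_1,g_1)$ to $T_1$: the size grows by $2$, the product stays $1$ because $g_1$ has order $2$, generation is retained, and $\Sigma$ is unchanged.

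For (ii), I will write $G^{*}=G\times\langle z\rangle$ with $z$ of order $p$ and start from a ramification structure $(T_1,T_2)$ for $G$ with $|T_k|=r_k\geq d+2$. Viewing $T_k=(g_1,\ldots,g_{r_k})$ as a list of vectors in the $\F_p$-space $G$, the space of linear relations among them has dimension $r_k-d\geq 2$, which will let me pick two indices $i_1\neq i_2$ such that the subtuple obtained by removing $g_{i_1}$ and $g_{i_2}$ still generates $G$. I then define $T_k^{*}$ by replacing $g_{i_1}$ with $g_{i_1}z$ and $g_{i_2}$ with $g_{i_2}z^{-1}$: the length and the product are preserved, the entries are nontrivial, and $T_k^{*}$ generates $G^{*}$ because the untouched entries already generate $G$ and one twisted entry then supplies $z$.

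The main obstacle will be checking that $(T_1^{*},T_2^{*})$ is disjoint, which I will do by cases on the cyclic subgroups being intersected. An untwisted $\langle g\rangle$ lies inside $G\times\{1\}$, while a twisted $\langle gz^{\pm 1}\rangle$ meets $G\times\{1\}$ only in the identity, so untwisted-versus-twisted intersections are automatically trivial, and untwisted-versus-untwisted is inherited from $(T_1,T_2)$. For a twisted subgroup from each side, any common element projects in the first coordinate to an element of $\langle g_{i_1}\rangle\cap\langle h_{j_1}\rangle\subseteq\Sigma(T_1)\cap\Sigma(T_2)=1$, and since $g_{i_1}$ and $h_{j_1}$ have order $p$ this forces the exponents to vanish mod $p$ and the element to be trivial. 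I expect the trickiest part to be precisely this disjointness bookkeeping together with the linear-algebra argument that secures the two ``removable'' indices; everything else should be routine.
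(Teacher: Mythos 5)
Your proposal is correct and follows essentially the same route as the paper: for (i) it splits/duplicates an entry of $T_1$ inside the cyclic subgroup it already generates (the paper uses $x_1\mapsto x_1^2,\,x_1^{-1}$ for odd $p$ and appends $x_1,x_1$ for $p=2$), and for (ii) it writes $G^*=G\times\langle z\rangle$ and twists two redundant entries of each tuple by $z$ and $z^{-1}$, exactly as in the paper. Your extra bookkeeping (the linear-algebra argument for two removable indices and the case analysis for disjointness of twisted subgroups) just makes explicit what the paper leaves implicit.
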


\begin{proof}
Let $(T_1, T_2)$ be a ramification structure  of size $(r_1, r_2)$ for $G$. We write $T_1=(x_1, x_2, \dots, x_{r_1})$.

We first prove (i). If
\begin{equation*}
T_1'
=
\begin{cases}
\big(x_1^2,x_2, \dots, x_{r_1}, x_1^{-1} \big)
&
\text{if \ $p$ is odd},
\\
\big(T_1, x_1, x_1\big)
&
\text{if \ $p=2$},
\end{cases}
\end{equation*}
then $(T_1', T_2)$ is a ramification structure as desired.

We next prove (ii). Let $G^{*}=G\times \langle y\rangle$ be an elementary abelian $p$-group of rank $d+1$.  Since $G$ is of rank $d$ and $r_1, r_2\geq d+2$, both $T_1$ and $T_2$ have at least two elements, say $a_1,b_1 \in T_1$ and $a_2, b_2 \in T_2$, that belong to the subgroup generated by the rest of the elements in $T_1$ and $T_2$, respectively. We modify $T_1, T_2$ to $T_1^{*}$ and $T_2^{*}$, by multiplying $a_1, a_2$ with $y$ and $b_1,b_2$ with $y^{-1}$. Then $(T_1^{*}, T_2^{*})$ is a ramification structure of size $(r_1,r_2)$ for $G^{*}$.
\end{proof}

Note that the roles of $r_1$ and $r_2$ are symmetric. Thus in Lemma \ref{basic operations}, $G$ also admits a ramification structure of size $(r_1, r_2+1)$ if $p$ is odd and of size $(r_1, r_2+2)$ if $p=2$.

\begin{thm}
\label{elementary abelian}
Let $G$ be an elementary abelian $p$-group of rank $d$ and let $r_1,r_2 \geq d+1$. Then $G$ admits a ramification structure of size $(r_1, r_2)$ if and only if the following conditions hold:
\begin{enumerate}
\item 
$d\geq 2$ if $p\geq 3$ or $d\geq 3$ if $p=2$.
\item
If $p=3$ then $r_1, r_2 \geq 4$.
\item
If $p=2$ then $r_1, r_2 \geq 5$, and furthermore if $d=3$ then $r_1, r_2$ are not both odd.
\end{enumerate}
\end{thm}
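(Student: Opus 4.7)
The plan is to establish necessity by elementary linear-algebraic arguments in $\F_p$, and sufficiency by constructing explicit ramification structures at the minimum admissible parameters and propagating via Lemma \ref{basic operations}. I write elements of $G$ additively throughout.

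For necessity, cyclic groups admit no ramification structure, so $d\geq 2$; and for $p=2$ the set $\F_2^2\setminus\{0\}$ has only three elements, too few to host two disjoint generating tuples, forcing $d\geq 3$, which gives (i). For (ii), with $p=3$ and $d=2$, a spherical triple $(x_1,x_2,x_3)$ generating $\F_3^2$ has entries in two lines $\langle a\rangle,\langle b\rangle$, and case analysis on $x_1,x_2\in\{\pm a,\pm b\}$ shows $x_3=-x_1-x_2$ either lies on a third line or repeats, contradicting disjointness or generation. For (iii), any length-$4$ spherical tuple satisfies $x_4=x_1+x_2+x_3$ and so spans dimension $\leq 3$, giving $r_i\geq 5$ when $d\geq 4$; for $d=3$, such a generating tuple must consist of four distinct entries whose support is $\F_2^3\setminus H$ for some $2$-dimensional $H$, and disjointness then forces the other tuple's support inside $H$, contradicting generation, so again $r_i\geq 5$. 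Finally, for $d=3$ with both $r_i$ odd, let $U_i\subseteq\Sigma(T_i)$ be the subset of elements appearing in $T_i$ with odd multiplicity. Then $|U_i|\equiv r_i\equiv 1\pmod 2$ and $\sum_{v\in U_i}v=0$. The zero-sum subsets of $\F_2^3\setminus\{0\}$ form a $4$-dimensional kernel inside $\F_2^7$ and have sizes $0,3,4,7$; the odd sizes are $3$ and $7$. The case $|U_i|=7$ forces $U_i=\F_2^3\setminus\{0\}$, contradicting disjointness; hence $|U_1|=|U_2|=3$ and each $U_i$ is a $2$-dimensional subspace minus $\{0\}$. But two $2$-dimensional subspaces in $\F_2^3$ always meet in a line, contradicting $U_1\cap U_2=\emptyset$.

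For sufficiency I provide explicit base cases and iterate using Lemma \ref{basic operations}. For $p\geq 5$ and $d=2$, there are $p+1\geq 6$ lines in $\F_p^2$, and for any three distinct lines one can choose nonzero representatives with zero sum; partitioning six lines into two triples gives a $(3,3)$-structure. For $p=3$ and $d=2$, the four lines of $\F_3^2$ split $2+2$: $T_1=(a,-a,b,-b)$ and $T_2=(a+b,-(a+b),a-b,-(a-b))$ form a $(4,4)$-structure. For $p=2$ and $d=3$, inside the Fano plane I build structures of sizes $(5,6),(6,5),(6,6)$ by combining, on one side, a zero-sum triangle plus a repeated extra vector for parity, and on the other side doubled copies of the three vectors in the complementary coset, which themselves form a basis. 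For $p=2$ and $d=4$, I construct a $(5,5)$-structure directly in $\F_2^4$, for instance with $T_1=(e_1,e_2,e_3,e_4,e_1+e_2+e_3+e_4)$ and $T_2=(e_1+e_2,e_2+e_3,e_3+e_4,e_1+e_2+e_3,e_2+e_3+e_4)$, checking disjointness, the spherical relation, and generation. With these bases in hand, Lemma \ref{basic operations}(i) increments $r_i$ by $1$ for odd $p$ or by $2$ for $p=2$, covering all larger pairs of the correct parity, and Lemma \ref{basic operations}(ii) raises the rank by $1$ whenever $r_1,r_2\geq d+2$, which matches the hypothesis $r_1,r_2\geq d+1$ at the new rank.

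The main obstacle is the $p=2$, $d=3$ case: the parity-$2$ increment of Lemma \ref{basic operations}(i) forces three separate base constructions inside the $7$-element Fano plane, each simultaneously satisfying disjointness, generation, and the zero-sum relation, and the pair $(5,5)$ must be realized in $\F_2^4$ while being provably excluded in $\F_2^3$, so the rank boundary in condition (iii) demands the sharpest case-split in the argument.
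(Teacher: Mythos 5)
Your proposal is correct and follows the same overall architecture as the paper's proof: necessity by analyzing the small cases $C_3\times C_3$, $C_2\times C_2$ and $C_2^{\,3}$, and sufficiency by exhibiting base structures of sizes $(3,3)$, $(4,4)$, $(5,6)/(6,6)$ and $(5,5)$ and then propagating with Lemma \ref{basic operations} exactly as the paper does. The genuinely different ingredient is your exclusion of both $r_1,r_2$ odd when $p=2$, $d=3$: passing to the sets $U_i$ of entries of odd multiplicity, noting $|U_i|\equiv r_i \pmod 2$ and $\sum_{v\in U_i}v=0$, classifying the zero-sum subsets of $\F_2^3\smallsetminus\{0\}$ (sizes $0,3,4,7$), and concluding that each $U_i$ is a plane minus the origin, so $U_1\cap U_2\neq\emptyset$; this is a cleaner, more structural argument than the paper's counting (each tuple must contain at least four distinct nontrivial elements, and $4+4>7$). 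Similarly, your $p\geq 5$ base case via partitioning six of the $p+1$ lines of $\F_p^2$ and choosing zero-sum representatives is more conceptual than the paper's explicit tuples, and your hyperplane-complement description of a spherical generating $4$-tuple in $\F_2^3$ slightly generalizes the paper's normal form. Two cosmetic slips to fix in writing this up: for $p=3$ the entries of a spherical generating triple lie on three distinct lines, not two (the element $-(x_1+x_2)$ can never lie on the line of $x_1$ or $x_2$), and it is precisely this that confines $T_2$ to the single remaining line and kills generation; and your template \emph{zero-sum triangle plus a repeated extra vector} only yields odd lengths, so the $(6,6)$ base in $\F_2^3$ must instead use two doubled generating triples with disjoint supports, e.g.\ $\{e_1,e_2,e_3\}$ and $\{e_1+e_2,\,e_2+e_3,\,e_1+e_2+e_3\}$, as in the paper.
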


\begin{proof}
We first assume that $G$ admits a ramification structure  $(T_1, T_2)$ of size $(r_1, r_2)$. We already know that $d\geq 2$. If $p=2$ and $G\cong C_2 \times C_2$, then clearly $\Sigma(T_1) \cap \Sigma(T_2) \neq 1$, a contradiction. Thus, if $p=2$ then $d\geq 3$.

We next assume that $p=3$. We will show that $r_1, r_2 \geq 4$. Suppose, on the contrary, that $r_1=3$. Then $G\cong C_3 \times C_3$. If we write $T_1=(x_1,x_2, (x_1x_2)^{-1})$, then $\Sigma(T_1)$ contains $6$  different nontrivial elements of $G$. The other two nontrivial elements of $G$ are $x_1x_2^2$ and $x_1^2x_2^4$. Since they do not generate $G$, there is no ramification structure for $G$, which is a contradiction.

We now assume that $p=2$. We show that $r_1, r_2 \geq 5$. Suppose that $r_1=4$. Then $G\cong C_2 \times C_2 \times C_2$. We write $T_1= (x_1, x_2, x_3, (x_1x_2x_3)^{-1})$. Then $T_2$ can only contain $x_1x_2, x_1x_3$ and $x_2x_3$. However, $\langle x_1x_2, x_1x_3, x_2x_3\rangle \neq G$, again a contradiction.

Finally, we show that if  $G\cong C_2 \times C_2 \times C_2$ then $r_1, r_2$ are not both odd. Suppose that $r_1$ is odd. Then observe that $T_1$ contains at least $4$ different nontrivial elements. Otherwise, if $T_1$ has $3$ different nontrivial elements, say $u,v,t$, then $(u, v, t)$ is a minimal system of generators of $G$. Since the product of the elements of $T_1$ is equal to $1$, each of $u, v, t$ appears an even number of times in $T_1$, which is not possible since $r_1$ is odd. 

We now prove the converse. To this purpose, it is enough to find ramification structures of sizes $(3,3)$ or $(4,4)$ according as $p\geq 5$ or $p=3$ if $d=2$, of sizes $(5,6)$ or $(6,6)$ if $d=3$ and $p=2$, and finally of size $(5,5)$  if $d=4$ and $p=2$. Then by applying (i) and (ii) in Lemma \ref{basic operations} repeatedly, we get the result.

Let
$
G=\langle x_1\rangle \times \langle x_2\rangle\cong C_p\times C_p
$
where $p\geq 3$. If we take
\begin{equation*}
T_1
=
\begin{cases}
\big(x_1, x_2, (x_1x_2)^{-1}\big)
&
\text{if \ $p\geq 5$},
\\
\big(x_1, x_1^{-1}, x_2, x_2^{-1} \big)
&
\text{if \ $p=3$},
\end{cases}
\end{equation*}
and
\begin{equation*}
T_2
=
\begin{cases}
\big(x_1x_2^2, x_1x_2^4, (x_1^2x_2^6)^{-1}\big)
&
\text{if \ $p\geq 5$},
\\
\big(x_1x_2, (x_1x_2)^{-1}, x_1x_2^2, (x_1x_2^2)^{-1}\big)
&
\text{if \ $p=3$},
\end{cases}
\end{equation*}
then $(T_1,T_2)$ is a ramification structure for $G$ of size $(3,3)$ if $p\geq 5$, or of size $(4,4)$ if $p=3$. 

Now assume that $G=\langle x_1\rangle \times \langle x_2\rangle \times \langle x_3\rangle\cong C_2\times C_2\times C_2$. If we take 
\begin{equation*}
T_1
=
\begin{cases}
\big(x_1x_2, x_1x_3, x_2x_3, x_1x_2x_3, x_1x_2x_3\big)
&
\text{if \ $r_1=5$},
\\
\big(x_1x_2, x_1x_3, x_1x_2x_3, x_1x_2, x_1x_3, x_1x_2x_3 \big)
&
\text{if \ $r_1=6$},
\end{cases}
\end{equation*}
and $T_2=\big( x_1, x_2, x_3, x_1, x_2, x_3 \big)$, then $(T_1, T_2)$ is a ramification structure for $G$ of size $(5,6)$ or $(6,6)$. 

Finally if $p=2$ and
$G=\langle x_1\rangle \times \langle x_2\rangle \times \langle x_3\rangle\times \langle x_4\rangle$, then we take $T_1=\big(x_1,x_2, x_3, x_4, (x_1x_2x_3x_4)^{-1} \big)$ and 
$T_2=\big(x_1x_2, x_2x_3, x_3x_4, x_1x_2x_3, x_2x_3x_4  \big)$. Then clearly $(T_1, T_2)$ is a ramification for $G$ of size $(5,5)$. This completes the proof.
\end{proof}

Theorem \ref{elementary abelian} can  also be deduced from Theorem 3.18 in \cite{GP2} that characterizes abelian groups with ramification structures. However, note that the statement of that theorem corresponding to abelian $2$-groups is not true in general. According to Theorem 3.18 in \cite{GP2}, if $G$ is an abelian $2$-group of exponent $2^e$ with $|G^{2^{e-1}}|=2^3$ and $G$ admits a ramification structure of size $(r_1,r_2)$, then $r_1, r_2$ cannot be both odd. However, the next example shows that this is not necessarily the case. We fix this mistake  in Theorem \ref{case p=2}.

\begin{exa}
\label{counterexample}
Let $G=\langle a \rangle
\times \langle x \rangle 
\times \langle y \rangle
\times \langle z \rangle \cong C_2 \times C_4 \times C_4 \times C_4$. Now $\exp G=4$ and $|G^2|=2^3$. If we take
\[
T_1=(x, y, z, x^{-1}, y^{-1}, z^{-1}a, a),
\]
and
\[
T_2=(xya, xz, yz, xyz, xyza),
\]
then clearly $(T_1, T_2)$ is a ramification structure for $G$ of size $(7, 5)$.
\end{exa}

We next see that the existence of ramification structures for a group of exponent $p$ can be deduced from Theorem \ref{exponent p}.

\begin{thm}
\label{exponent p}
Let $G$ be a $p$-group of exponent $p$.  Then $G$ admits a ramification structure of size $(r_1,r_2)$ if and only if $G/\Phi(G)$ admits a ramification structure of size $(r_1,r_2)$.
\end{thm}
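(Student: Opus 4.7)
The plan is to prove the equivalence in two halves, using Proposition \ref{lifting process} for lifting from $G/\Phi(G)$ to $G$, and Theorem \ref{elementary abelian} for passing from $G$ down to $G/\Phi(G)$. The key structural fact I will exploit is that in a $p$-group of exponent $p$, every non-identity cyclic subgroup has order exactly $p$, so two such subgroups either coincide or meet trivially; this makes disjointness checkable at the level of conjugacy classes of cyclic subgroups.

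For the backward direction, given a ramification structure $(\overline{T_1}, \overline{T_2})$ of $G/\Phi(G)$ of size $(r_1, r_2)$, I apply Proposition \ref{lifting process}(ii) to lift each $\overline{T_i}$ to a spherical system of generators $T_i$ of $G$ (the case $\Phi(G)=1$ being trivial). To verify disjointness in $G$, suppose $1 \neq z \in \Sigma(T_1) \cap \Sigma(T_2)$: then $z$ lies in two conjugate cyclic subgroups $\langle g_i \rangle^u$ and $\langle h_j \rangle^v$ of order $p$, which therefore coincide. Projecting to the abelian quotient $G/\Phi(G)$ yields $\langle \overline{g_i} \rangle = \langle \overline{h_j} \rangle$, contradicting disjointness of $(\overline{T_1}, \overline{T_2})$.

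For the forward direction, since $G/\Phi(G)$ is elementary abelian of rank $d=d(G)$, by Theorem \ref{elementary abelian} it suffices to verify the conditions (i)--(iii) of that theorem for $(r_1, r_2)$. The bound $r_1, r_2 \geq d+1$ is automatic since $T_i$ generates $G$. For the rank condition, $G$ cannot be cyclic, so $d \geq 2$; when $p=2$, $G$ has exponent $2$ and is hence elementary abelian, so $G = G/\Phi(G)$ and Theorem \ref{elementary abelian} applied to $G$ itself delivers the remaining conditions immediately. The only substantive step left is, for $p=3$, the bound $r_1, r_2 \geq 4$.

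The main obstacle is precisely this last point, which I handle by contradiction. If $r_1=3$, then $T_1=(g_1, g_2, (g_1 g_2)^{-1})$ with $\langle g_1, g_2 \rangle = G$, forcing $d(G)=2$. Every $2$-generator $3$-group of exponent $3$ is a quotient of the Burnside group $B(2,3)$, which is the Heisenberg group $H$ of order $27$; hence $G \cong C_3 \times C_3$ or $G \cong H$. The abelian case contradicts Theorem \ref{elementary abelian}. For $G=H$, the $13$ cyclic subgroups of order $3$ split as the centre $\langle z \rangle$ together with $12$ others forming $4$ conjugacy classes of size $3$, one per non-trivial cyclic subgroup of $H/\Phi(H)$. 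Since $\overline{g_1}, \overline{g_2}, \overline{g_3}$ generate three distinct non-trivial cyclic subgroups of $H/\Phi(H)$, the set $\Sigma(T_1)$ exhausts exactly three of these four classes, so every cyclic subgroup available to $T_2$ lies either in $\langle z \rangle$ or in the single remaining class; all of these sit inside a common subgroup of order $9$. This forces $T_2$ into a proper subgroup, contradicting $\langle T_2 \rangle = G$. Hence $r_1 \geq 4$, and symmetrically $r_2 \geq 4$, completing the verification.
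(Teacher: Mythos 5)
Your proof is correct, and its skeleton coincides with the paper's: the backward direction lifts via Proposition \ref{lifting process}(ii) and uses that in exponent $p$ all cyclic subgroups generated by the tuple entries have order $p$, so an intersection forces equality of cyclic subgroups, which survives in the abelian quotient $G/\Phi(G)$; the forward direction reduces to Theorem \ref{elementary abelian}, with $p=2$ trivial and the only substantive point being that $p=3$ excludes $r_1=3$, which both you and the paper settle by invoking the restricted Burnside bound to reduce to a $2$-generator exponent-$3$ group of order at most $27$. The one place you genuinely diverge is the endgame for the nonabelian group of order $27$: the paper argues by pigeonhole that some $a\in T_1$ and $b\in T_2$ lie in a common maximal subgroup, writes $b=a^ic$ with $c\in\Phi(G)=G'$, and uses that every element of $G'$ is a commutator $[a^i,g]$ to get $b=(a^i)^g$, contradicting disjointness; you instead enumerate the $13$ subgroups of order $3$ of the Heisenberg group as the centre plus four conjugacy classes of size $3$ (one over each nontrivial cyclic subgroup of $H/\Phi(H)$), observe that $\Sigma(T_1)$ exhausts three of the four classes, and conclude that $T_2$ is confined to the remaining maximal subgroup of order $9$, contradicting generation. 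Both arguments are sound; the paper's commutator computation is shorter, while your counting argument is more explicit and avoids the (small) verification that $c$ is a commutator with $a^i$, at the cost of a structure analysis specific to the extraspecial group of order $27$.
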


\begin{proof}
Note that if $p=2$ then $G$ is an elementary abelian $2$-group, and hence $G$ coincides with $G/\Phi(G)$.  Thus we assume that $p\geq3$.  We first show that
if $G/\Phi(G)$ admits a ramification structure $(U_1, U_2)$ of size $(r_1,r_2)$, then so does $G$. 

Consider a lift of $(U_1, U_2)$ to $G$, say $(T_1, T_2)$, such that $T_1$ and $T_2$ are spherical systems of generators of $G$. Since $\exp G= p$, all elements in $T_1$ and $T_2$ are of order 
$p$. We claim that $(T_1, T_2)$ is a ramification structure of size $(r_1,r_2)$ for $G$. Suppose, on the contrary, that there are $a\in T_1$ and $b\in T_2$ such that
$\langle a\rangle^g=\langle b\rangle$ for some $g\in G$. Since $G/\Phi(G)$ is abelian, we get $\langle \overline{a}\rangle=\langle  \overline{b}\rangle$, which is a contradiction.
	
Let us now prove the converse. Assume that $G$ admits a ramification structure of size $(r_1,r_2)$. Note that $G/\Phi(G)$ has rank at least $2$. Then by Theorem \ref{elementary abelian}, any elementary abelian $p$-group of rank $\geq 2$ for $p\geq 5$ admits a ramification structure of size $(r_1,r_2)$ if $r_1, r_2 \geq 3$.
	
Finally we assume that $p=3$. According to Theorem \ref{elementary abelian}, we only need to prove that $G$ does not admit a ramification structure with $r_1=3$. By way of contradiction, it follows that $G$ is a $2$-generator group with $\exp G=3$. Then  \cite[14.2.3]{rob} implies that $G$ is of order $3^3$. Observe that each element in $T_1$ falls into a different maximal subgroup of $G$. Since $G$ has $4$ maximal subgroups and not all elements in $T_2$ fall into the same maximal subgroup, it then follows that there are elements in $T_1$ and $T_2$, say $a\in T_1$ and $b\in T_2$, which are in the same maximal subgroup.  Then we have
\[
b=a^ic,
\]
for some $c\in \Phi(G)=G'$ and for $i \in \{1,2\}$. Since $|G|=3^3$ and $a^i$ is a generator of $G$, we can write $c=[a^i, g]$ for some $g\in G$. It then follows that $b=(a^i)^g$, a contradiction.
\end{proof}

\vspace{10pt}

We now introduce a property which is essential to our result, and then we describe some families of finite $p$-groups satisfying this property.

Let $G$ be a finite $p$-group, and let $i\ge 1$ be an integer.
Following Xu \cite{xu}, we say that $G$ is \emph{semi-$p^i$-abelian\/} if the following condition holds:
\begin{equation}
\label{semi-pi-abelian}
x^{p^i} = y^{p^i}
\quad
\text{if and only if}
\quad
(xy^{-1})^{p^i} = 1.
\end{equation}
If $G$ is semi-$p^i$-abelian, then we have \cite[Lemma 1]{xu}:
\begin{enumerate}
\item[(SA1)]
$\Omega_i(G)=\{x\in G\mid x^{p^i}=1\}$.
\item[(SA2)]
$|G:\Omega_i(G)|=|\{x^{p^i}\mid x\in G\}|$.
\end{enumerate}
If $G$ is semi-$p^i$-abelian for every $i\ge 1$, then $G$ is called \emph{strongly semi-$p$-abelian\/}.

By \cite[Theorem 3.14]{suz2}, regular $p$-groups are strongly semi-$p$-abelian.
On the other hand, by Lemma 3 in \cite{fer}, a powerful $p$-group of exponent $p^e$ is semi-$p^{e-1}$-abelian.
%Also for odd $p$, the groups in which $\Omega_1(G)\le Z(G)$, i.e.\ $p$-central $p$-groups, are strongly semi-$p$-abelian, as follows from Theorem 1 in \cite{xu}.
Furthermore, by Theorem 2.2 in \cite{FG}, generalized $p$-central $p$-groups, i.e.\ groups in which $\Omega_1(G)\leq Z_{p-2}(G)$ for odd $p$, or $\Omega_2(G)\leq Z(G)$ for $p=2$, are strongly semi-$p$-abelian.

\vspace{10pt}

Before we proceed to prove Theorem A, we need the following lemma.

\begin{lem}
\label{induced rs}
Let $G$ be a $p$-group of exponent $p^e$ and let $d=d(G)$. Suppose that $G$ is semi-$p^{e-1}$-abelian. Then the following hold:
\begin{enumerate}
\item 
If $(T_1, T_2)$ is a ramification structure for $G$, then 
$\big(\overline{T}_1\smallsetminus \{\overline{1}\}, \overline{T}_2\smallsetminus \{\overline{1}\}\big)$ is a ramification structure for $G/\Omega_{e-1}(G)$.
\item
If $(U_1,U_2)$ is a ramification structure of size $(r_1, r_2)$ for $G/\Omega_{e-1}(G)$ and $r_1, r_2\geq d+1$, then there is a lift of $(U_1,U_2)$ to $G$ which is a ramification structure of size $(r_1, r_2)$ for $G$.
\end{enumerate}
\end{lem}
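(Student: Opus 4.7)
My plan is to treat the two parts separately: (i) is a direct verification, while (ii) combines a lifting step with an inductive disjointness argument.

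For part (i), I would set $V_i = \overline{T}_i \smallsetminus \{\overline{1}\}$ and observe that the spherical generating conditions transfer for free—$\overline{T}_i$ generates $G/\Omega_{e-1}(G)$ since $T_i$ generates $G$, the product in the quotient is $\overline{1}$ since the product in $G$ is $1$, and removing identity elements preserves both. The only content is disjointness, which I would prove by contradiction. Suppose $\overline{c} \ne \overline{1}$ lies in $\Sigma(V_1) \cap \Sigma(V_2)$, so that $\overline{c} = \overline{a}^{j\overline{g}} = \overline{b}^{k\overline{h}}$ for some $a \in T_1$, $b \in T_2$, $g,h \in G$, $j,k \in \mathbb{Z}$. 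Then $(a^{jg})(b^{kh})^{-1} \in \Omega_{e-1}(G)$, and the semi-$p^{e-1}$-abelian hypothesis converts this into $(a^{jg})^{p^{e-1}} = (b^{kh})^{p^{e-1}}$. This common element lies in $\Sigma(T_1) \cap \Sigma(T_2) = \{1\}$, forcing $a^{jg} \in \Omega_{e-1}(G)$ and hence $\overline{c} = \overline{1}$, a contradiction.

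For part (ii), I would first invoke Proposition \ref{lifting process}(ii) to produce lifts $T_1, T_2$ of $U_1, U_2$ that are spherical generating systems of $G$; the hypotheses hold because $r_i \geq d+1$ and, under the harmless assumption $e \geq 2$ (the case $e = 1$ being trivial), $\Omega_{e-1}(G)$ is nontrivial. To check disjointness, I would exploit the map
\[
\phi : G/\Omega_{e-1}(G) \longrightarrow G, \qquad \phi(\overline{x}) = x^{p^{e-1}},
\]
which is well defined and injective by the semi-$p^{e-1}$-abelian hypothesis together with (SA1), and which plainly commutes with both integer powers and conjugation by elements of $G$. Consequently $\phi$ carries each $\Sigma(U_i)$ bijectively onto $\bigcup_{a \in T_i, g \in G} \langle a^{p^{e-1}}\rangle^g$, and applying it to the hypothesis $\Sigma(U_1) \cap \Sigma(U_2) = \{\overline{1}\}$ yields $\phi(\Sigma(U_1)) \cap \phi(\Sigma(U_2)) = \{1\}$.

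Disjointness of $(T_1, T_2)$ would then follow from a short induction. Assume for contradiction $c = a^{jg} = b^{kh} \ne 1$ lies in $\Sigma(T_1) \cap \Sigma(T_2)$. I claim $c \in \Omega_{e-i}(G)$ with $p^i \mid j$ and $p^i \mid k$ for every $i = 1, \dots, e$. The base case $i = 1$ falls out of projecting to the quotient and invoking disjointness of $(U_1, U_2)$, together with the fact that any lift of a nontrivial element of $U_1$ has order exactly $p^e$ (so $\overline{a}$ has order $p$). For the inductive step, once $p^i \mid j$, one may rewrite
\[
c^{p^{e-i-1}} = \bigl(a^{p^{e-1}}\bigr)^{(j/p^i)\, g},
\]
exhibiting $c^{p^{e-i-1}}$ in $\phi(\Sigma(U_1))$, and symmetrically in $\phi(\Sigma(U_2))$, so $c^{p^{e-i-1}} = 1$. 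Taking $i = e$ forces $c = 1$, the desired contradiction. The main obstacle I anticipate is setting up $\phi$ cleanly and verifying its power- and conjugation-equivariance in the possibly non-abelian setting—once that is in place, the inductive bootstrap is routine.
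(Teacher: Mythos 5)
Your argument is correct and takes essentially the same route as the paper: part (i) pushes a common element into $\Omega_{e-1}(G)$ and converts it via the semi-$p^{e-1}$-abelian property and (SA1) into an equality of $p^{e-1}$-th powers lying in $\Sigma(T_1)\cap\Sigma(T_2)$, and part (ii) lifts through Proposition \ref{lifting process}(ii) and translates coincidences of $p^{e-1}$-th powers back down to $G/\Omega_{e-1}(G)$. The only cosmetic difference is that in (ii) the paper bypasses your injective power map $\phi$ and the induction on $p$-adic valuations by observing directly that a nontrivial intersection $\langle a\rangle^g\cap\langle b\rangle$ of cyclic groups of order $p^e$ already forces $\langle a^{p^{e-1}}\rangle^g=\langle b^{p^{e-1}}\rangle$, after which a single application of semi-abelianity contradicts disjointness of $(U_1,U_2)$.
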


\begin{proof}
We first prove (i) by way of contradiction. Note that $G/\Omega_{e-1}(G)$  is of exponent $p$. 
Suppose that there are 
$\overline{a} \in \overline{T}_1 \smallsetminus \{\overline{1}\}$ and 
$\overline{b} \in \overline{T}_2\smallsetminus \{\overline{1}\}$ such that 
$ \langle \overline{a} \rangle= \langle \overline{b}\rangle^{\overline{g}}$ for some  $\overline{g} \in G/\Omega_{e-1}(G)$, i.e. $\overline{b}^{\overline{g}}=\overline{a}^i$ for some $i$ not divisible by $p$. Then we have $b^ga^{-i} \in \Omega_{e-1}(G)$, and consequently $(b^ga^{-i})^{p^{e-1}}=1$, by (SA1).  Since $G$ is semi-$p^{e-1}$-abelian, we get $(b^g)^{p^{e-1}}=a^{ip^{e-1}}$. This is a contradiction, since both $a$ and $b$ are of order $p^e$ and $\langle a\rangle \cap \langle b\rangle^{g}=1$.

We next prove (ii). By  part (ii) of Proposition \ref{lifting process}, we can take a lift of $(U_1,U_2)$ to $G$, say $(T_1, T_2)$, such that $T_1$ and $T_2$ are spherical systems of generators of $G$. Observe that all elements in $T_1$ and $T_2$ are of order $p^e$. We next show that $T_1$ and $T_2$ are disjoint. Suppose, on the contrary, that there are $a\in T_1$ and $b\in T_2$ such that
\[
\langle a^{p^{e-1}}\rangle^g=\langle b^{p^{e-1}}\rangle,
\]
for some $g\in G$, i.e $(a^g)^{p^{e-1}}=b^{ip^{e-1}}$ for some integer $i$ not divisible by $p$. Since $G$ is semi-$p^{e-1}$-abelian, then  $a^gb^{-i} \in \Omega_{e-1}(G)$, and consequently, $\langle \overline{a}\rangle^{\overline{g}}=\langle \overline{b}\rangle$ in $G/\Omega_{e-1}(G)$, which is a contradiction since $(U_1,U_2)$ is a ramification structure for $G/\Omega_{e-1}(G)$.
\end{proof}

We are now ready to prove Theorem A. We deal separately with the cases $p\geq 3$ and $p=2$.

\begin{thm}
\label{case p>2}
Let $G$ be a $p$-group of exponent $p^e$ with $p\geq 3$, and let $d=d(G)$. Suppose that $G$ is semi-$p^{e-1}$-abelian. Then $G$ admits a ramification structure if and only if 
$|\{g^{p^{e-1}} \mid g \in G \}| \geq p^2$.
In that case, $G$ admits a ramification structure of size $(r_1,r_2)$ if and only if $r_1, r_2 \geq d+1$, and also $r_1, r_2\geq 4$ provided that $p=3$.
\end{thm}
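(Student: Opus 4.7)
The plan is to reduce to the elementary abelian case via two successive quotients. Write $\bar G = G/\Omega_{e-1}(G)$. Two basic observations will anchor everything: first, $\bar G$ has exponent dividing $p$, since $g^{p^e}=1$ implies $(g^p)^{p^{e-1}}=1$ and hence $g^p\in \Omega_{e-1}(G)$ by (SA1); second, by (SA2) we have $|\bar G|=|\{g^{p^{e-1}}\mid g\in G\}|$, and of course $d(\bar G)\leq d(G)=d$ since $\bar G$ is a quotient of $G$.

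For necessity, start from a ramification structure $(T_1,T_2)$ of $G$ of size $(r_1,r_2)$. Lemma \ref{induced rs}(i) then produces a ramification structure of $\bar G$ of some size $(s_1,s_2)$ with $s_i\leq r_i$. Since cyclic groups admit no ramification structures and $\exp\bar G\mid p$, non-cyclicity forces $|\bar G|\geq p^2$, which is the stated cardinality bound. The inequalities $r_1,r_2\geq d+1$ are automatic, as noted just after Definition \ref{dfn}. When $p=3$, Theorem \ref{exponent p} further transports the ramification structure of $\bar G$ to one on the elementary abelian quotient $\bar G/\Phi(\bar G)$ of the same size $(s_1,s_2)$, and Theorem \ref{elementary abelian} then forces $s_1,s_2\geq 4$, whence $r_1,r_2\geq 4$.

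For sufficiency, fix $r_1,r_2\geq d+1$ (with $r_1,r_2\geq 4$ if $p=3$) and assume $|\bar G|\geq p^2$. Since $\exp \bar G\mid p$, this yields $d(\bar G)\geq 2$. The elementary abelian quotient $\bar G/\Phi(\bar G)$ has rank $d(\bar G)$ with $2\leq d(\bar G)\leq d$, so all hypotheses of Theorem \ref{elementary abelian} are met: the bound $r_1,r_2\geq d+1\geq d(\bar G)+1$ is exactly what is required, and the extra condition $r_1,r_2\geq 4$ when $p=3$ is in place by assumption. Therefore $\bar G/\Phi(\bar G)$ admits a ramification structure of size $(r_1,r_2)$. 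Theorem \ref{exponent p} lifts this to a ramification structure of $\bar G$ of the same size, and Lemma \ref{induced rs}(ii) lifts that in turn to $G$.

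No serious obstacle is anticipated; the argument is essentially a careful assembly of Lemma \ref{induced rs}, Theorem \ref{exponent p}, and Theorem \ref{elementary abelian}. The only point requiring vigilance is to check that the size bound $r_i\geq d+1$ propagates correctly through every reduction — and it does, precisely because $d(\bar G)\leq d$, so the weaker bound $r_i\geq d(\bar G)+1$ needed by Theorem \ref{elementary abelian} is automatically satisfied.
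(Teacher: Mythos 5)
Your proposal is correct and takes essentially the same route as the paper: the same reduction from $G$ to $G/\Omega_{e-1}(G)$ and then to its Frattini quotient, assembled from Lemma \ref{induced rs}, Theorem \ref{exponent p} and Theorem \ref{elementary abelian}, with (SA1)/(SA2) controlling exponent and order of the quotient. The only (harmless) deviation is in ruling out $|\{g^{p^{e-1}} \mid g \in G\}| \leq p$: the paper argues directly that $G^{p^{e-1}}$ would be cyclic of order $p$ and would meet $\langle a^{p^{e-1}}\rangle=\langle b^{p^{e-1}}\rangle$ for order-$p^e$ elements $a\in T_1$, $b\in T_2$, whereas you invoke Lemma \ref{induced rs}(i) plus the fact that cyclic groups admit no ramification structures --- which is exactly how the paper handles the corresponding step in the $p=2$ case (Theorem \ref{case p=2}).
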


\begin{proof}
	
We first assume that $G$ admits a ramification structure  $(T_1, T_2)$. By (SA2), the cardinality of the set $X=\{g^{p^{e-1}} \mid g \in G \}$ is a power of $p$. Suppose that $|X|=p$. It then follows that the subgroup $G^{p^{e-1}}$ is cyclic of order $p$.
Note that by (SA1), we have $\exp \Omega_{e-1}(G)=p^{e-1}$.
Then there are elements $a\in T_1$ and $b\in T_2$ such that $o(a)=o(b)=p^e$. Thus,
\[
G^{p^{e-1}}= \langle a^{p^{e-1}}\rangle=\langle b^{p^{e-1}}\rangle,
\]
which is a contradiction.

We next prove that if $p=3$ and $G$ admits a ramification structure of size $(r_1, r_2)$, then
$r_1,r_2 \geq 4$. Suppose, by way of contradiction, that $r_1=3$. Then since  $|X| \geq 3^2$, we have $|G/\Omega_{e-1}(G)|\geq 3^2$, by (SA2). Part (i) of Lemma \ref{induced rs} implies that $G/\Omega_{e-1}(G)$ admits a ramification structure of size $(r,s)$ where $r\leq r_1\leq3$. However, according to Theorems \ref{elementary abelian} and \ref{exponent p} this is not possible. 

Now assume that $|X| \geq p^2$. Let us use the bar notation $\overline{G}$ for the factor group $G/\Omega_{e-1}(G)$. Then  $|\overline{G}|\geq p^2$ and $d(\overline{G})\geq 2$.
It follows from Theorems \ref{elementary abelian} and \ref{exponent p} that $\overline{G}$ admits a ramification structure  of size $(r, s)$ for all $r,s \geq d(\overline{G})+1$, and $r,s \geq 4$ provided that $p=3$. If we take $r_1, r_2 \geq d+1\geq d(\overline{G})+1$, and $r_1, r_2 \geq4$ provided that $p=3$, then part (ii) of Lemma \ref{induced rs} implies that $G$ admits a ramification structure of size $(r_1, r_2)$.  This completes the proof.
\end{proof}

We next deal with the prime $2$.

\begin{thm}
\label{case p=2}
Let $G$ be a $2$-group of exponent $2^e$, and let $d=d(G)$. Suppose that $G$ is semi-$2^{e-1}$-abelian. Then  $G$ admits a ramification structure if and only if 
$|\{g^{2^{e-1}} \mid g \in G \}| \geq 2^3$.
In that case, $G$ admits a ramification structure of size $(r_1, r_2)$ if and only if the following conditions hold:
\begin{enumerate}
\item
$r_1, r_2 \geq d+1$.
\item 
$r_1, r_2 \geq 5$.
\item
If $|\{g^{2^{e-1}} \mid g \in G \}|=2^3$, then $(r_1,r_2)\neq (5,5)$, and furthermore if $e=1$, i.e. $G\cong C_2\times C_2 \times C_2$, then $r_1, r_2$ are not both odd.
\end{enumerate}
\end{thm}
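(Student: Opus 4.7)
The proof follows the template of Theorem \ref{case p>2}: pass between $G$ and the elementary abelian $2$-group $\overline{G}:=G/\Omega_{e-1}(G)$ by means of Lemma \ref{induced rs}, and apply Theorem \ref{elementary abelian}. By (SA2) the hypothesis $|\{g^{2^{e-1}}\mid g\in G\}|\geq 2^3$ is simply $|\overline G|\geq 2^3$.

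For the forward direction, given a ramification structure $(T_1,T_2)$ of $G$ of size $(r_1,r_2)$, Lemma \ref{induced rs}(i) produces a ramification structure of $\overline G$ of some size $(r_1',r_2')$ with $r_i'\leq r_i$. Theorem \ref{elementary abelian} applied to the elementary abelian $2$-group $\overline G$ then forces $d(\overline G)\geq 3$ (hence $|X|\geq 2^3$), $r_i'\geq 5$ (hence $r_i\geq 5$), and, when $|X|=2^3$, that $(r_1',r_2')$ is not both odd; this in turn excludes $(r_1,r_2)=(5,5)$ when $|X|=2^3$, because otherwise $(r_1',r_2')$ would be $(5,5)$. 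If $e=1$, then $G=\overline G\cong C_2\times C_2\times C_2$ and the both-odd exclusion transfers unchanged.

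For the reverse direction, assume $|X|\geq 2^3$ and (i)--(iii). If $|X|\geq 2^4$, then $d(\overline G)\geq 4$ and Theorem \ref{elementary abelian} yields a ramification structure of $\overline G$ of size $(r_1,r_2)$ (the inequalities $r_i\geq d(\overline G)+1$ and $r_i\geq 5$ both follow from $r_i\geq d+1$ and (ii)), which lifts to $G$ by Lemma \ref{induced rs}(ii). If $|X|=2^3$ and $e=1$, then $G\cong C_2\times C_2\times C_2$ and Theorem \ref{elementary abelian} applies directly. If $|X|=2^3$, $e\geq 2$, and $(r_1,r_2)$ has at least one even component, then Theorem \ref{elementary abelian} again furnishes the required ramification structure of $\overline G\cong C_2\times C_2\times C_2$ (the only sizes excluded there being the both-odd ones), which lifts via Lemma \ref{induced rs}(ii).

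The main obstacle is the remaining subcase: $|X|=2^3$, $e\geq 2$, and $r_1,r_2$ both odd with $(r_1,r_2)\neq(5,5)$. Here $\overline G\cong C_2\times C_2\times C_2$ admits no ramification structure of size $(r_1,r_2)$, so direct lifting is unavailable. Mimicking Example \ref{counterexample}, the plan is to lift a ramification structure of $\overline G$ of size $(r_1-1,r_2)$ or $(r_1,r_2-1)$ (admissible because one component is now even) to a ramification structure of $G$ of the same size via Lemma \ref{induced rs}(ii), and then to split one element $t$ of order $2^e$ in the chosen tuple into the pair $(tz,z^{-1})$ with $z\in\Omega_{e-1}(G)\setminus\{1\}$. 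The product and the generation are preserved automatically; since $\overline{tz}=\overline t$ and the semi-$2^{e-1}$-abelian property forces $tz$ to have order $2^e$, the disjointness of $\langle tz\rangle$ with the opposite tuple carries over from that of $\langle t\rangle$. The delicate step is to choose $z$ so that $\langle z\rangle^G$ avoids $\Sigma$ of the opposite tuple; this reduces to picking $z$ whose $G$-conjugacy class misses the set $\{b^{2^{e-1}}:b\text{ in the opposite tuple}\}$, and is possible by a counting argument exploiting that $|\Omega_{e-1}(G)|=|G|/|X|$ is large compared with that small set. The boundary case where neither $r_1-1$ nor $r_2-1$ meets the lifting hypothesis $\geq d+1$ (namely $r_1=r_2=d+1$, which forces $d$ even and $\geq 6$) will require a more direct construction of two minimal generating systems of $G$, each augmented by one parity-adjusting element of $\Omega_{e-1}(G)$, in the spirit of Example \ref{counterexample}.
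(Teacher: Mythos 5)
Your forward direction, and the reverse direction in all cases where $\overline G=G/\Omega_{e-1}(G)$ itself admits a structure of size $(r_1,r_2)$, coincide with the paper's argument and are fine. The difficulties are concentrated exactly where you flag them, in the subcase $|X|=2^3$, $e\ge 2$, $r_1,r_2$ both odd, $(r_1,r_2)\neq(5,5)$, and there are two genuine gaps. First, the choice of $z$ is not established by the counting you invoke. The condition to be verified concerns the unique involution of $\langle z\rangle$, not $z$ itself, and its supply is not controlled by $|\Omega_{e-1}(G)|$: for $G=C_{2^e}\times C_{2^e}\times C_{2^e}$ the subgroup $\Omega_{e-1}(G)$ has order $2^{3(e-1)}$ while $G$ has exactly the $7$ involutions lying in $X=\{g^{2^{e-1}}\mid g\in G\}$, so ``$|\Omega_{e-1}(G)|$ is large compared with a small set'' proves nothing. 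The step is repairable, but by a different argument: since $\overline G$ is abelian and $G$ is semi-$2^{e-1}$-abelian, $(a^g)^{2^{e-1}}=a^{2^{e-1}}$ for all $a,g\in G$, so $2^{e-1}$-th powers are central and the involutions contained in $\Sigma(T_1)$ are exactly the elements of $\{a^{2^{e-1}}\mid a\in T_1\}$; by disjointness of the two quotient systems in $\overline G\cong C_2\times C_2\times C_2$ this set has at most $4$ elements (the images of $T_1$ avoid the at least $3$ distinct entries of the other quotient system), whereas $X\smallsetminus\{1\}$ consists of $7$ central involutions, so one may take $z$ to be an involution in $X$ outside that set. None of these observations appears in your proposal, and without them the ``delicate step'' does not go through as written.

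Second, you explicitly leave unproved the boundary case $r_1=r_2=d+1$ (both odd, so $d\ge 6$ even), where neither $(r_1-1,r_2)$ nor $(r_1,r_2-1)$ meets the hypothesis of Lemma~\ref{induced rs}(ii); this case is nonempty and must be handled for the theorem to hold. It is precisely the reason the paper does not lift the second tuple through Lemma~\ref{induced rs}(ii): instead it writes down an explicit lift $T_2=(x,y,z,xn_1,yn_2,zn_3,xn_4,\dots,xn_{d-3},x,\dots,x)$ of a concrete quotient system of size $r_2-1$, which only needs $r_2-1\ge d$, appends $n^{-1}$ with $n=n_1\cdots n_{d-3}$ a product of generators of $\Omega_{e-1}(G)$ modulo $G^2$, corrects one entry by an element of $G^2=\Phi(G)$ to restore sphericality, and chooses the basis $\{\overline x,\overline y,\overline z\}$ of $\overline G$ adapted to $n$ (if the involution of $\langle n\rangle$ equals $x^{2^{e-1}}$, then $x$ is placed in the same tuple as $n$), thereby avoiding any counting over involutions and covering $r_2=d+1$ simultaneously. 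Your splitting idea $t\mapsto(tz,z^{-1})$ is a viable alternative route once the first gap is filled as above, but until you either supply the deferred ``more direct construction'' for $r_1=r_2=d+1$ or adapt the paper's explicit lift, the proof is incomplete.
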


\begin{proof}
We first assume that $G$ admits a ramification structure.
Suppose that $X=\{g^{2^{e-1}} \mid g \in G \}$ is of cardinality at most $2^2$, so that $|G:\Omega_{e-1}(G)|\leq 2^2$. Then according to Theorem \ref{elementary abelian},  $G/\Omega_{e-1}(G)$ does not admit a ramification  structure. Thus, $G$ has no ramification structure, as follows from Lemma \ref{induced rs}(i). This is a contradiction. So we have $|X| \geq2^3$.

If the ramification structure for $G$ is of size $(r_1, r_2)$, then we have $r_1, r_2 \geq d+1$. By Theorem \ref{elementary abelian}, ramification structures of $G/\Omega_{e-1}(G)$ have size $(r,s)$ where $r,s\geq 5$, and furthermore $r,s$ are not both odd if $|G/\Omega_{e-1}(G)|=2^3$. Hence, by part (i) of Lemma \ref{induced rs},  we have $r_1, r_2 \geq 5$ and  furthermore, if  $|G/\Omega_{e-1}(G)|=2^3$ then $(r_1, r_2) \neq (5,5)$. Finally if 
$G\cong C_2\times C_2 \times C_2$ then $r_1, r_2$ are not both odd, by Theorem \ref{elementary abelian}.

We now work under the assumption $|X| \geq2^3$. Suppose that $r_1, r_2 \geq d+1$, $r_1, r_2 \geq 5$ and furthermore that $r_1, r_2$ are not both odd if $|X|=2^3$.  Then by Theorem \ref{elementary abelian}, $G/\Omega_{e-1}(G)$ admits a ramification structure of size $(r_1, r_2)$. Lemma \ref{induced rs}(ii) implies that $G$ admits a ramification structure of size $(r_1, r_2)$.

It remains to prove that if $r_1, r_2\geq5$, $(r_1,r_2)\neq (5,5)$ and both $r_1, r_2$ are odd, then $G$ admits a ramification structure of size $(r_1, r_2)$ under the assumptions $|X|=2^3$ and $e\geq2$. We may assume that $r_2\geq 7$. Then  $G/\Omega_{e-1}(G)$ admits a ramification structure of size $(r_1, r_2-1)$.

Since $G/G^2$ is elementary abelian of rank $d$ and $G/\Omega_{e-1}(G)$ is of rank $3$, we have
$ \Omega_{e-1}(G)/G^2$ is of rank $d-3$. We take a generating set $\{n_1, \dots, n_{d-3}\}$ of $\Omega_{e-1}(G)$ modulo $G^2$. Call $n=n_1\dots n_{d-3}$ and let $o(n)=2^k<2^e$. If $1\neq n^{2^{k-1}}=x^{2^{e-1}}$ for some $x\in G$, then since $x\notin \Omega_{e-1}(G)$ we take a generating set of $G/\Omega_{e-1}(G)$ containing $\overline{x}$, say 
$G/\Omega_{e-1}(G)=\langle \overline{x} \rangle \times \langle \overline{y} \rangle \times \langle \overline{z} \rangle$. Otherwise, if $n^{2^{k-1}} \neq g^{2^{e-1}}$ for any $g \in G$, then we take any generating set of $G/\Omega_{e-1}(G)$.

Now consider the following ramification structure of $G/\Omega_{e-1}(G)$:
\[
U_1=\big( \overline{xy},  \overline{yz}, \overline{xz}, \overline{xyz}, \overline{xyz}, \overline{xy}, \dots, \overline{xy} \big) \ \ \
\text{and}
\]
\[
U_2=\big( \overline{x}, \overline{y}, \overline{z}, \overline{x}, \overline{y}, \overline{z}, \overline{x}, \dots, \overline{x}\big),
\]
where $|U_1|=r_1$ and $|U_2|=r_2-1$.
Since $r_1 \geq d+1$,  by part (ii) of Proposition \ref{lifting process}, we take a lift $T_1$ of $U_1$ so that $T_1$ is a spherical system of generators of $G$. Then consider the following lift of $U_2$ to $G$:
\[
T_2=\big(x, y, z, xn_1, yn_2, zn_3, xn_4, \dots, xn_{d-3}, x, \dots, x  \big),
\]
where $|T_2|=r_2-1$. Clearly, $T_2$ generates $G$. Observe that the product of all components of $T_2$ is $n$ modulo $G^2$, i.e. the product is equal to  $wn$ for some $w\in G^2$. Now consider the following tuple:
\[
T_2^*=\big(w^{-1}x, y, z, xn_1, yn_2, zn_3, xn_4, \dots, xn_{d-3}, x, \dots, x, n^{-1} \big),
\]
where $|T_2|=r_2$. Since $w\in G^2=\Phi(G)$, it follows that $T_2^*$ generates $G$ and furthermore, it is spherical. Our claim is that $(T_1, T_2^*)$ is a ramification structure of size $(r_1, r_2)$ for $G$.

Notice that all elements in $T_1 \cup T_2^*$ are of order $2^e$ except $n^{-1}$. Then by using the same argument in the proof of part (ii) of Lemma \ref{induced rs}, we conclude that 
$\langle a \rangle^g \cap \langle b\rangle=1$ for any $g\in G$,  $a\in T_1$ and $b\in T_2^ *\smallsetminus \{n^{-1}\}$. On the other hand, if $n^{2^{k-1}}=x^{2^{e-1}}$ then  since $\langle x^{2^{e-1}}\rangle \neq \langle a^{2^{e-1}}\rangle^g$ for any $g\in G$ and $a\in T_1$, we have
$\langle n\rangle \cap \Sigma(T_1)=1$. Otherwise, if $n^{2^{k-1}} \neq g^{2^{e-1}}$ for any $g\in G$, then clearly $\langle n\rangle \cap \Sigma(T_1)=1$. This completes the proof.
\end{proof}

We close this section by showing that the assumption of being semi-$p^{e-1}$-abelian is essential in Theorem A. As we next see, for a general finite $p$-group $G$, the cardinality of the set $\{g^{p^{e-1}} \mid g\in G \}$ does not control the existence of ramification structures for $G$. To this purpose, we will work with $2$-generator $p$-groups constructed in \cite{FG}. For more details, we suggest readers to see pages 11-13 of \cite{FG}.

\begin{lem}
\label{beauville to rs}
Let $G$ be a Beauville group. Then $G$ admits a ramification structure of size $(r_1, r_2)$ for any $r_1, r_2\geq 3$.
\end{lem}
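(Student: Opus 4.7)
The plan is to take any Beauville structure $(T_1,T_2)$ of $G$ and enlarge the sizes of $T_1$ and $T_2$ independently, one unit at a time, while preserving both the spherical generation property and the disjointness $\Sigma(T_1)\cap\Sigma(T_2)=1$. The local move I would use is the following: whenever a spherical system of generators $T$ contains an entry $x$ of order at least $3$, replace that single entry by the consecutive pair $x^2, x^{-1}$. Both are nontrivial (because $o(x)\geq 3$), the product of the tuple is unchanged since $x^2\cdot x^{-1}=x$, the generated subgroup does not change, and both new entries lie in $\langle x\rangle$, so the $\Sigma$-set does not grow. Hence the modified tuple is again a spherical system of generators of $G$ with the same $\Sigma$-set, and together with the other (unchanged) tuple it yields a ramification structure whose size has grown by $1$ in the desired coordinate.

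To iterate this move as many times as needed, I only need to check that the enlarged tuple still contains an element of order at least $3$; this is automatic, since $x^{-1}$ is in the new tuple and has the same order as $x$. Therefore, once we know that the given Beauville structure $(T_1,T_2)$ contains an element of order $\geq 3$ in each of $T_1$ and $T_2$, we reach every prescribed size $(r_1,r_2)$ with $r_1,r_2\geq 3$ by applying the move $r_1-3$ times to $T_1$ and $r_2-3$ times to $T_2$.

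The main step that requires argument — and the potential obstacle — is thus the base case: both halves of the given Beauville structure must contain an element of order $\geq 3$. I would prove this by contradiction. Suppose $T_1=(x_1,x_2,x_3)$ consists entirely of involutions. The relation $x_1x_2x_3=1$ gives $x_3=(x_1x_2)^{-1}$, and $x_3$ being an involution is equivalent to $x_1x_2=x_2x_1$. Hence $G=\langle x_1,x_2\rangle$ is elementary abelian of rank at most $2$. Such a $G$ is either cyclic, and therefore admits no ramification structure at all, or isomorphic to $C_2\times C_2$, in which case $\Sigma(T_1)=G$ and no nontrivial disjoint $T_2$ can exist. Either way we contradict that $G$ is Beauville, so $T_1$ must contain an element of order $\geq 3$; the same argument applies to $T_2$, and this closes the proof.
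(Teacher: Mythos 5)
Your proof is correct, but it follows a different route than the paper. The paper keeps the original Beauville triples and only pads them: to fix parity it either keeps $U_1=\bigl(x_1,y_1,(x_1y_1)^{-1}\bigr)$ or replaces it by the $4$-tuple $\bigl(x_1,y_1,y_1^{-1},x_1^{-1}\bigr)$, and then appends pairs $x_1,x_1^{-1}$ (respectively $x_2,x_2^{-1}$) to grow the length in steps of $2$; since every new entry lies in a cyclic subgroup already contributing to $\Sigma$, disjointness is automatic and no hypothesis on element orders is needed. Your move instead grows the length in steps of $1$ by replacing an entry $x$ with $x^2,x^{-1}$, which forces you to know that each half of the Beauville structure contains an element of order at least $3$; your verification of this base case (three involutions with product $1$ commute, so $G$ would be $C_2$ or $C_2\times C_2$, neither of which is Beauville) is correct, and it is in fact a small extra piece of information not present in the paper's argument. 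In exchange, the paper's parity trick makes the whole proof order-free and slightly shorter, while your single $+1$ move (the same device the paper uses in Lemma \ref{basic operations}(i) for odd $p$) is more uniform since it reaches every size from the single base $(3,3)$ without case distinctions on parity.
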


\begin{proof}
Assume that $G$ is a Beauville group, that is, it admits a ramification structure $(U_1, U_2)$ of size $(3,3)$. Let $U_1=\big(x_1, y_1, (x_1y_1)^{-1}\big)$ and let  $U_2=\big(x_2, y_2, (x_2y_2)^{-1}\big)$. Consider the following tuples:
\[
T_1=\big( x_1, y_1, y_1^{-1}, x_1^{-1}\big) \ \ \
\text{or} \ \ \
T_1=U_1,
\]
and
\[
T_2=\big( x_2, y_2, y_2^{-1}, x_2^{-1}\big) \ \ \
\text{or} \ \ \
T_2=U_2.
\]
By adding $x_1, x_1^{-1}$ to $T_1$ and
$x_2, x_2^{-1}$ to $T_2$ repeatedly, we obtain a pair of spherical systems of generators 
$(T_1^*, T_2^*)$ for $G$ of size $(r_1, r_2)$ for any $r_1, r_2 \geq 3$. Then since 
$(U_1, U_2)$ is a ramification structure for $G$, so does $(T_1^*, T_2^*)$.
\end{proof}

The following result shows that the `only if' part of Theorem A fails for a general finite $p$-group.

\begin{pro}
Let $p\ge 5$ be a prime. Then there exists a $p$-group $G$ such that:
\begin{enumerate}
\item
If $\exp G=p^e$ then $|\{g^{p^{e-1}} \mid g\in G\}|=p$.
\item
$G$ admits a ramification structure of size $(r_1, r_2)$ for any $r_1, r_2\geq 3$.
\end{enumerate}
\end{pro}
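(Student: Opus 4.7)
The plan is to invoke the family of $2$-generator $p$-groups constructed by Fern\'andez-Alcober and G\"ul in \cite[pp.~11--13]{FG}, which were tailored precisely to show that outside the semi-$p^{e-1}$-abelian class the size of the set of $p^{e-1}$-powers does not determine the existence of Beauville-type structures.

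First, I would recall from \cite{FG} the explicit construction, for each prime $p\ge 5$, of a $2$-generator $p$-group $G$ of exponent $p^e$ in which the set $\{g^{p^{e-1}}\mid g\in G\}$ collapses onto a (cyclic, central) subgroup of order exactly $p$. This verifies item (i) directly from the presentation; note that the equality $|X|=p$ here is compatible with $d(G)=2$ precisely because $G$ is not semi-$p^{e-1}$-abelian, so property (SA2) fails and the index $|G:\Omega_{e-1}(G)|$ need not equal $|X|$. The same source exhibits an explicit Beauville structure $(U_1,U_2)$ of size $(3,3)$ on $G$, so $G$ is a Beauville group; applying Lemma \ref{beauville to rs} then upgrades $(U_1,U_2)$ to a ramification structure of size $(r_1,r_2)$ for every $r_1,r_2\ge 3$, establishing (ii).

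The only genuine work is the citation of \cite{FG}; once the existence of a Beauville group $G$ with $|\{g^{p^{e-1}}\mid g\in G\}|=p$ is granted, the argument reduces to a single invocation of Lemma \ref{beauville to rs}. The main obstacle---producing a Beauville group whose $p^{e-1}$-powers degenerate to a single cyclic subgroup of order $p$---is handled in \cite{FG} via a careful analysis of the $p$-power structure in a non-semi-$p^{e-1}$-abelian group, an analysis that does not transfer to the setting of the present paper and therefore is simply quoted.
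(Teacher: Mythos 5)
Your proposal is correct and follows essentially the same route as the paper: quote the construction from the proof of Corollary 2.12 in \cite{FG} of a Beauville $p$-group of exponent $p^e$ whose $p^{e-1}$-powers form a subgroup of order $p$ (giving (i)), and then apply Lemma \ref{beauville to rs} to extend the Beauville structure to ramification structures of every size $(r_1,r_2)$ with $r_1,r_2\ge 3$ (giving (ii)).
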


\begin{proof}
In the proof of Corollary 2.12 in \cite{FG}, it was shown that there exists a Beauville $p$-group $G$ with $\exp G=p^e$ such that $|G^{p^{e-1}}|=p$. It  then follows that $|\{g^{p^{e-1}} \mid g\in G\}|=p$ and hence (i) holds. Since $G$ is a Beauville group, (ii) readily follows from Lemma \ref{beauville to rs}. 
\end{proof}

Finally, the following result shows that for every power of $p$, there is a $p$-group $G$ such that the cardinality of the set $\{g^{p^{e-1}} \mid g\in G\}$ is exactly that power and $G$ does not admit a ramification structure.

\begin{pro}
For every prime $p\ge 5$, and positive integer $m$, there exists a $p$-group $G$ such that:
\begin{enumerate}
\item
If $\exp G=p^e$ then $|\{g^{p^{e-1}} \mid g\in G\}|=p^m$.
\item
$G$ does not admit a ramification structure.
\end{enumerate}
\end{pro}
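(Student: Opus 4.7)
The plan is to invoke the family of $2$-generator $p$-groups constructed on pages 11--13 of \cite{FG}. Those groups are specifically designed to fail the semi-$p^{e-1}$-abelian condition, and admit a parameter controlling the size of $\{g^{p^{e-1}}\mid g\in G\}$. For the given $m$, I would choose a member $G$ of that family with $\exp G = p^e$ satisfying (i) $|\{g^{p^{e-1}}\mid g\in G\}|=p^m$, and crucially also (ii) $|G/\Omega_{e-1}(G)|=p$. Both conditions can be enforced simultaneously precisely because $G$ is not semi-$p^{e-1}$-abelian: in general one has $|G/\Omega_{e-1}(G)|\leq|\{g^{p^{e-1}}\mid g\in G\}|$ with equality exactly in the semi-$p^{e-1}$-abelian case, so that in the non-semi-abelian setting the gap can be made arbitrarily large by tuning the construction.

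Once such a $G$ has been pinned down, the plan for showing that $G$ admits no ramification structure mirrors the $|X|<p^2$ direction of the proof of Theorem \ref{case p>2}. A direct power-map computation on the defining relations of $G$ shows that there is a characteristic central subgroup $Z\leq G$ of order $p$ with the property that $\langle g^{p^{e-1}}\rangle = Z$ for every $g\in G\setminus\Omega_{e-1}(G)$. Now let $T=(g_1,\dots,g_r)$ be any spherical system of generators. Since $G/\Omega_{e-1}(G)\cong C_p$ is non-trivial and the $g_i$ together generate $G$, at least one $g_i$ must lie outside $\Omega_{e-1}(G)$, and hence $Z=\langle g_i^{p^{e-1}}\rangle \subseteq \Sigma(T)$. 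Applying this to both tuples of a putative ramification structure $(T_1,T_2)$ forces $Z\subseteq \Sigma(T_1)\cap\Sigma(T_2)\ne 1$, contradicting disjointness.

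The main obstacle is the verification that the construction of \cite{FG} can be adjusted to satisfy simultaneously the three required structural properties: $|\{g^{p^{e-1}}\}|=p^m$, $|G/\Omega_{e-1}(G)|=p$, and that the $p^{e-1}$-th powers of \emph{all} elements outside $\Omega_{e-1}(G)$ generate a single fixed cyclic subgroup of order $p$. The first of these is a direct computation from the defining relations; the second will follow by identifying $\Omega_{e-1}(G)$ with a specific maximal subgroup in the construction; the third is the genuinely non-semi-abelian content, and amounts to checking that in the relevant coset of $\Omega_{e-1}(G)$, the $p^{e-1}$-th power map collapses onto a single line. Once these three facts are established, the argument against any ramification structure is the short lifting argument above, which does not depend on the size $(r_1,r_2)$ at all.
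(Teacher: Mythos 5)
Your closing deduction is sound as a conditional statement: if there is a proper subgroup $H\le G$ and a fixed subgroup $Z$ of order $p$ with $\langle g^{p^{e-1}}\rangle=Z$ for every $g\in G\smallsetminus H$, then every spherical system of generators must contain an element outside $H$, so $Z\subseteq\Sigma(T_1)\cap\Sigma(T_2)$ for any two such systems and no ramification structure of any size exists. The genuine gap is everything before that step. The entire content of the proposition is the existence, for every $m$, of a group with $|\{g^{p^{e-1}}\mid g\in G\}|=p^m$ and no ramification structure, and you reduce this to three structural properties of the groups of \cite{FG} --- $|\{g^{p^{e-1}}\mid g\in G\}|=p^m$, $|G/\Omega_{e-1}(G)|=p$, and collapse of the $p^{e-1}$-th power map onto a single line outside $\Omega_{e-1}(G)$ --- which you yourself label ``the main obstacle'' and only assert ``can be enforced'' by ``adjusting'' the construction. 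That is a restatement of what must be proved, not a proof, and the properties you postulate are delicate: together they force all $p^m$ values of the power map to be attained by elements of order $p^e$ lying inside $\Omega_{e-1}(G)$ (the subgroup generated by elements of order at most $p^{e-1}$), an irregularity that nothing in your argument establishes for the groups of \cite{FG}. Your auxiliary claim that in general $|G/\Omega_{e-1}(G)|\le|\{g^{p^{e-1}}\mid g\in G\}|$ ``with equality exactly in the semi-$p^{e-1}$-abelian case'' is also unsupported: (SA2) gives equality under the semi-abelian hypothesis, but neither the general inequality nor the converse is proved anywhere, and it is doing real work in your plausibility argument.

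The paper's own proof needs none of this. The group in the second part of the proof of Corollary 2.12 in \cite{FG} already comes with $\exp G=p^e$, with $\{g^{p^{e-1}}\mid g\in G\}=G^{p^{e-1}}$ of order $p^m$ (and $m$ arbitrary by Corollary 2.13 of \cite{FG}), and with the property that for any two generating pairs $(x_1,y_1)$ and $(x_2,y_2)$ there are members, say $x_1$ and $x_2$, such that $\langle x_1^i\rangle=\langle x_2^j\rangle\neq 1$. Since $G$ is a $2$-generator $p$-group, any spherical system of generators contains a $2$-element generating subset (Burnside basis), so this pair property immediately prevents two spherical systems from being disjoint; no control of $\Omega_{e-1}(G)$, of its index, or of the power map outside it is required. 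To salvage your route you would have to verify your three structural claims from the explicit construction in \cite{FG} (or build groups with those properties from scratch), and do so for every $m$; as written, the key facts are assumed rather than proved.
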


\begin{proof}
Consider the group $G$ in the second part of the proof of Corollary 2.12 in \cite{FG}. Then $G$ is a $2$-generator $p$-group $G$ with $\exp G=p^e$ such that  $|G^{p^{e-1}}|=p^m$ for some $m$. One can also observe from the proof that the subgroup $G^{p^{e-1}}$ coincides with the set $\{g^{p^{e-1}} \mid g\in G\}$. Furthermore, it was shown that for every pair of generating sets $(x_1, y_1)$ and $(x_2, y_2)$, there are elements, say $x_1$ and $x_2$, such that $\langle x_1^i\rangle=\langle x_2^j\rangle\neq 1$ for some integers $i ,j$.
Thus, $G$ does not admit a ramification structure.
Furthermore, Corollary 2.13 in \cite{FG} implies that $m$ can be any positive integer.
\end{proof}

\section{Finite nilpotent groups }
In this section, we prove Theorem B. We give the possible sizes of ramification structures for nilpotent groups whose Sylow $p$-subgroups are semi-$p^{e-1}$-abelian if the exponent is $p^e$. To this purpose, we need the following result regarding a direct product of groups of coprime order.

\begin{pro}
\label{direct product}
Let $G$ and $G^{*}$ be groups of coprime order. Then the following hold:
\begin{enumerate}
\item 
If $G$ and $G^{*}$ admit ramification structures of size $(r_1, r_2)$ and $(r_1^*,r_2^*)$, respectively, then $G\times G^{*}$ admits a ramification structure of size  $(r, s)$ where $r=\max\{r_1, r_1^*\}$ and $s= \max\{r_2, r_2^*\}$.
\item
If $G\times G^*$ admits a ramification structure of size $(r, s)$, then $G$ and $G^*$ admit ramification structures of size $(r_1, r_2)$ and $(r_1^*,r_2^*)$, respectively, for some $r_1, r_1^* \leq r$ and $r_2, r_2^* \leq s$. Furthermore, if $G$ is of odd order, we also have $r_1=r$ and $r_2=s$.
\end{enumerate}
\end{pro}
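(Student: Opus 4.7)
The plan for part (i) is to pair the given ramification structures componentwise. After relabelling I may assume $r_1\le r_1^*$ and $r_2\le r_2^*$; writing $T_j=(g_1,\dots,g_{r_j})$ for the $j$-th tuple of $G$ and $T_j^*=(h_1,\dots,h_{r_j^*})$ for that of $G^*$, I will set
\[
U_j=\bigl((g_1,h_1),\dots,(g_{r_j},h_{r_j}),(1,h_{r_j+1}),\dots,(1,h_{r_j^*})\bigr).
\]
Each $U_j$ is a tuple of non-trivial elements whose product is $(1,1)$, and the standard fact that a subgroup of a direct product of coprime-order groups which surjects onto both factors must be the whole product will show that $\langle U_j\rangle=G\times G^*$. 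For disjointness I will use that $\gcd(|g_i|,|h_i|)=1$ gives $\langle(g_i,h_i)\rangle=\langle g_i\rangle\times\langle h_i\rangle$, so every element of a conjugate of $\langle(g_i,h_i)\rangle$ (or of $\langle(1,h_i)\rangle$) has its $G$-component in $\Sigma(T_j)\cup\{1\}$ and its $G^*$-component in $\Sigma(T_j^*)\cup\{1\}$; an element of $\Sigma(U_1)\cap\Sigma(U_2)$ then has both projections trivial by the two hypothesized disjointnesses.

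For (ii) I will project a ramification structure $(T_1,T_2)$ of $G\times G^*$ of size $(r,s)$ through $\pi\colon G\times G^*\to G$ and delete the (possibly occurring) trivial entries to obtain tuples $\overline T_j$. Each $\overline T_j$ is a spherical system of generators of $G$ of size at most $|T_j|$. The central observation is again that, when $\gcd(|g|,|h|)=1$, the cyclic group $\langle(g,h)\rangle$ equals $\langle g\rangle\times\langle h\rangle$; so any $(a,b)$ lying in a conjugate of some $\langle g_i\rangle$ ($g_i\in T_j$) forces $(a,1)$ to lie in the same conjugate. Therefore if $a\in\Sigma(\overline T_1)\cap\Sigma(\overline T_2)$ is non-trivial, then $(a,1)\in\Sigma(T_1)\cap\Sigma(T_2)=\{1\}$, a contradiction, so $(\overline T_1,\overline T_2)$ is a ramification structure of $G$. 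The analogous projection onto $G^*$ yields one for $G^*$.

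Finally, to upgrade the sizes in the odd-order case I will use the move
\[
(x_1,x_2,\dots,x_{r_1})\ \longmapsto\ (x_1^2,x_1^{-1},x_2,\dots,x_{r_1}),
\]
which turns a spherical system of generators of $G$ of length $r_1$ into one of length $r_1+1$. If $|G|$ is odd, then $|x_1|$ is odd, so $x_1^2$ is non-trivial and $\langle x_1^2\rangle=\langle x_1\rangle$; consequently the set $\Sigma$ is unchanged and disjointness with the other half of the structure is preserved. Applying this move repeatedly to $\overline T_1$ (resp.\ $\overline T_2$) until its length reaches $r$ (resp.\ $s$) will produce a ramification structure of $G$ of size exactly $(r,s)$. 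I expect the trickiest step to be the coprime-orders identification of $(a,1)$ inside $\langle(g,h)\rangle$ that drives the disjointness in both parts; the remainder reduces to routine constructions and bookkeeping.
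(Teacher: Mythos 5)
Your argument is correct and is essentially the paper's own proof: part (i) pads the shorter tuples with identities in one factor and pairs entries componentwise, part (ii) projects onto each factor and deletes trivial entries, using that coprimality gives $\langle(g,h)\rangle=\langle g\rangle\times\langle h\rangle$ (the paper phrases the same point by raising to the power $|G^*|$), and your odd-order length adjustment by the squaring move plays the role of the paper's insertion of $z_1,z_1^{-1}$ pairs together with one squaring. The only slip is the opening relabelling: swapping $G$ and $G^*$ swaps both coordinates at once, so you cannot always arrange $r_1\le r_1^*$ and $r_2\le r_2^*$ simultaneously (e.g.\ sizes $(3,5)$ and $(4,4)$); this is harmless, since in each of the two tuples you simply pad with identities whichever factor's tuple happens to be shorter, and the rest of your verification goes through unchanged.
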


\begin{proof}
We first prove (i).
Assume that $(T_1, T_2)$ and $ (T_1^{*}, T_2^{*})$ are ramification structures of size $(r_1, r_2)$ and $(r_1^*, r_2^*)$ for $G$ and $G^{*}$, respectively. Let $r=\max\{r_1, r_1^*\}$ and $s= \max\{r_2, r_2^*\}$. Then by adding as many times the identity as needed to $T_1, T_2, T_1^*$ and $T_2^*$, we obtain $U_1, U_2, U_1^*$ and $U_2^*$ where $|U_1|=|U_1^*|=r$ and $|U_2|=|U_2^*|=s$. Let
\[
U_1=(x_1, \dots, x_r ) \ \ \
\text{and} \ \ \
U_2=(y_1, \dots, y_s),
\]
\[
U_1^*=(x_1^*, \dots, x_r^* ) \ \ \
\text{and} \ \ \
U_2^*=(y_1^*, \dots, y_s^*).
\]
Then let 
\[
A_1=\big( (x_1, x_1^{*}), \dots, (x_r, x_r^{*})\big) \ \ \
\text{and} 
\]
\[
A_2=\big( (y_1,y_1^{*}), \dots, (y_s, y_s^{*})\big).
\]
Observe that since $G$ and $G^{*}$ have coprime order, both $A_1$ and $A_2$ generate $G\times G^{*}$. We will see that $(A_1, A_2)$ is a ramification structure for $G\times G^{*}$. Otherwise, there exist $(a,a^{*}) \in A_1$ and $(b,b^{*}) \in A_2$ such that
\[
\langle (a,a^{*}) \rangle^{(g,g^{*})} \cap \langle (b,b^{*}) \rangle
\neq
\{(1,1)\}, 
\]
for some $(g,g^{*}) \in G \times G^{*}$. It then follows that either $\langle a \rangle^g \cap \langle b\rangle \neq 1$ or $\langle a^{*}\rangle^{g^{*}} \cap \langle b^{*}\rangle \neq 1$, which is a contradiction. 

Let us now prove (ii). Assume that 
\[
A_1=\big( (x_1,x_1^{*}), \dots, (x_{r}, x_{r}^{*}) \big) \ \ \
\text{and} \ \ \
A_2=\big( (y_1,y_1^{*}), \dots, (y_{s}, y_{s}^{*}) \big)
\]
form a ramification structure of size $(r, s)$ for $G\times G^{*}$. Assume that after deleting the identity element in $(x_1, \dots, x_{r})$ and $(y_1, \dots, y_{s})$ we get $T_1=(z_1, \dots, z_{r_1})$ and $T_2=(t_1, \dots, t_{r_2})$ for some $r_1\leq r$ and $r_2\leq s$.  We claim  that $(T_1, T_2)$ is a ramification structure of size $(r_1, r_2)$ for $G$. The same arguments apply to $G^{*}$.
For every $(a,a^{*})\in A_1$ and $(b,b^{*}) \in A_2$ we have
\begin{equation}
\label{intersection in product}
\langle (a,a^{*})\rangle^{(g,g^{*})} \cap \langle (b,b^{*})\rangle=\{(1,1)\},
\end{equation}
for all $(g, g^{*})\in G\times G^{*}$. Let $|G|=l$ and $|G^{*}|=m$, where $\gcd(l,m)=1$. Then by equation (\ref{intersection in product}), we get
\[
\langle ((a^m)^g, 1)\rangle \cap \langle (b^m, 1)\rangle=\{(1,1)\},
\]
and  hence $\langle a^m \rangle^g \cap \langle b^m \rangle=1 $. Since $\gcd(l,m)=1$, it then follows that 
$\langle a \rangle^g \cap \langle b \rangle=1$. 

Finally we assume that $G$ is of odd order. If $r-r_1$ is even, then we take 
$T_1=(z_1, \dots, z_{r_1}, z_1, z_1^{-1}, \dots, z_1, z_1^{-1})$. Now suppose that $r-r_1$ is odd. Since $G$ is of odd order, we have $o(z_1)\neq 2$. Then in this case we take 
$T_1=(z_1^2, z_1^{-1}, z_2, \dots, z_{r_1}, z_1, z_1^{-1}, \dots, z_1, z_1^{-1})$.  In both cases, $T_1$ is a spherical system of generators of $G$ of size $r$. By using the same arguments, we can make $|T_2|=s$. Then by the previous paragraph, $(T_1,T_2)$ is a ramification structure of size $(r, s)$ for $G$, as desired. This completes the proof.
\end{proof}

The following proposition is easily deduced from Proposition \ref{direct product}.

\begin{pro}
\label{nilpotent general}
Let $G$ be a nilpotent group. Then
\begin{enumerate}
\item 
$G$ admits a ramification structure if and only if all Sylow $p$-subgroups of $G$ admit a ramification structure.
\item 
If $G$ is of odd order, then $G$ admits a ramification structure of size $(r_1, r_2)$ if and only if all Sylow $p$-subgroups of $G$ admit a ramification structure of size $(r_1, r_2)$.
\end{enumerate}
\end{pro}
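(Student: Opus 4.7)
The plan is to apply Proposition~\ref{direct product} inductively on the number of distinct primes dividing $|G|$, exploiting the fact that a finite nilpotent group $G$ is the internal direct product $G=\prod_{p\mid|G|}G_p$ of its Sylow subgroups, whose orders are pairwise coprime.

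For part (i), the forward direction is obtained by fusing ramification structures in the Sylow subgroups: write $G=G_{p_1}\times H$ with $H=\prod_{i\geq 2}G_{p_i}$, so that $\gcd(|G_{p_1}|,|H|)=1$. By the inductive hypothesis (on the number of prime divisors) $H$ admits a ramification structure, and a single application of Proposition~\ref{direct product}(i) then produces one for $G$. The converse uses the same decomposition in reverse: given a ramification structure of $G$, Proposition~\ref{direct product}(ii) hands back ramification structures of both $G_{p_1}$ and $H$, and induction on the number of primes yields one for each $G_p$.

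For part (ii), assume that $|G|$ is odd, so that every $G_p$ is of odd order. If each $G_p$ admits a ramification structure of size exactly $(r_1,r_2)$, then iterating Proposition~\ref{direct product}(i)---where the maxima $\max\{r_1,r_1^*\}$ and $\max\{r_2,r_2^*\}$ collapse to $r_1$ and $r_2$---gives $G$ a ramification structure of size $(r_1,r_2)$. Conversely, if $G$ has a ramification structure of size $(r_1,r_2)$, the "furthermore" clause of Proposition~\ref{direct product}(ii), applied to the decomposition $G=G_{p_1}\times H$ in which both factors are of odd order, yields ramification structures of size \emph{exactly} $(r_1,r_2)$ for $G_{p_1}$ and, by the same clause with the roles of the two factors swapped, also for $H$. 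The conclusion then follows by induction on the number of prime divisors of $|G|$, noting that $|H|$ remains odd at each step.

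The argument is essentially bookkeeping, and no step poses a real obstacle. The only points requiring a moment's care are that the "odd order" hypothesis is preserved when we peel off a single Sylow factor (immediate from Lagrange), and that when all factor sizes already coincide with $(r_1,r_2)$ the max-rule in Proposition~\ref{direct product}(i) causes no inflation of the size.
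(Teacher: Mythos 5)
Your argument is correct and is exactly the deduction the paper intends: the paper states Proposition~\ref{nilpotent general} "is easily deduced from Proposition~\ref{direct product}," and your induction on the number of prime divisors, peeling off one Sylow factor at a time and using parts (i) and (ii) of that proposition (including the odd-order "furthermore" clause for exact sizes), is precisely that deduction spelled out. The only nitpick is cosmetic: what you call the "forward direction" of (i) is the "if" direction of the statement, but the mathematics is sound.
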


In order to characterize abelian groups with ramification structures, Garion and Penegini \cite{GP2} reduced the study to their Sylow $p$-subgroups. However, as far as the sizes of ramification structures are concerned, this reducing argument is not correct in general. More precisely, if $G$ is an abelian group of even order, then the size of a ramification structure of $G$ need not be inherited by the Sylow $2$-subgroup of $G$, as we see in the next example. We fix this mistake in Theorem \ref{nilpotent special}.

\begin{exa}
\label{non-inherited size}
Let $G=\langle a \rangle
\times \langle b \rangle 
\times \langle c \rangle
 \cong C_6 \times C_6 \times C_2$. If we take
\[
T_1=(a, b, c, b^{-1}, (ac)^{-1}),
\]
and
\[
T_2=(ab, ab, (ab)^{-2}, abc, (abc)^{-1}, a^2bc, (a^2bc)^{-1}),
\]
then $(T_1, T_2)$ is a ramification structure of size $(5, 7)$ for $G$ . However, the Sylow $2$-subgroup of $G$, which is $C_2\times C_2 \times C_2$, does not admit a ramification structure of size $(5,7)$.
\end{exa}

We close the paper by proving Theorem B.

\begin{thm}
\label{nilpotent special}
Let $G$ be a nilpotent group, and let $d=d(G)$. Let $G_p$ denote the Sylow $p$-subgroup of $G$ for every prime $p$ dividing $|G|$. Suppose that $\exp G_p=p^{e_p}$ and all $G_p$ are semi-$p^{e_p-1}$-abelian. Then $G$ admits a ramification structure of size $(r_1, r_2)$ if and only if the following conditions hold:
\begin{enumerate}
\item 
$r_1, r_2 \geq d+1$.
\item
$(r_1, r_2) \in S(G_p)$ for $p$ odd.
\item
$(r_1, r_2) \in S(G_2)$ unless $G_2\cong C_2 \times C_2 \times C_2$.
\item
If $G_2\cong C_2 \times C_2 \times C_2$ then $r_1, r_2 \geq 5$ and $(r_1, r_2)\neq (5,5)$. Furthermore, if $G\cong C_2 \times C_2 \times C_2$ then $r_1, r_2$ are not both odd. 
\end{enumerate} 
\end{thm}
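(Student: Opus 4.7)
The proof will split into a forward and a reverse direction, and will lean on three previously established ingredients: Theorem~A, which describes $S(G_p)$ for each Sylow subgroup; Proposition~\ref{direct product}, which transfers ramification structures across coprime direct products; and its consequence Proposition~\ref{nilpotent general}.

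For the forward direction, write $G=G_2\times H$ with $H=\prod_{p\text{ odd}}G_p$ and apply Proposition~\ref{direct product}(ii). Since $H$ has odd order, the ``furthermore'' clause yields $(r_1,r_2)\in S(H)$; iterating the same argument inside $H$ (peeling off one odd Sylow at a time) delivers $(r_1,r_2)\in S(G_p)$ for every odd $p$, which is (ii). On the $G_2$ side, Proposition~\ref{direct product}(ii) only gives $(s_1,s_2)\in S(G_2)$ with $s_i\le r_i$, but Theorem~A forces $s_i\ge d(G_2)+1$, $s_i\ge 5$, and $(s_1,s_2)\ne(5,5)$ when $|\{g^{2^{e_2-1}}\mid g\in G_2\}|=2^3$; these constraints are inherited by $(r_1,r_2)$ because $s_i\le r_i$. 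Hence Theorem~A gives $(r_1,r_2)\in S(G_2)$ whenever $G_2\not\cong C_2\times C_2\times C_2$, proving (iii). The same argument yields the size conditions of (iv) when $G_2\cong C_2\times C_2\times C_2$; the extra ``not both odd'' clause is required only for $G\cong C_2\times C_2\times C_2$, in which case $G=G_2$ and it is immediate from Theorem~A. Condition~(i) is built into the definition of a ramification structure.

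For the reverse direction, (ii) combined with Proposition~\ref{nilpotent general}(ii) produces a ramification structure on $H$ of size exactly $(r_1,r_2)$. If $G_2$ is trivial we are done. If $G_2\not\cong C_2\times C_2\times C_2$, condition~(iii) supplies $(r_1,r_2)\in S(G_2)$, and one application of Proposition~\ref{direct product}(i) glues the two pieces into the desired ramification structure on $G$. If $G_2\cong C_2\times C_2\times C_2$ and $r_1,r_2$ are not both odd (which is automatic when $G\cong C_2\times C_2\times C_2$ by (iv)), Theorem~\ref{elementary abelian} again gives $(r_1,r_2)\in S(G_2)$, and the same glueing works.

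The main obstacle is the remaining case: $G_2\cong C_2\times C_2\times C_2$, $G\not\cong C_2\times C_2\times C_2$, and $r_1,r_2$ both odd. Here Theorem~\ref{elementary abelian} forbids $(r_1,r_2)\in S(G_2)$, so a naive glue is impossible. The saving point is that $H$ is now nontrivial and carries a ramification structure of size $(r_1,r_2)$. By Theorem~\ref{elementary abelian}, $C_2\times C_2\times C_2$ admits ramification structures of sizes $(5,6)$, $(6,5)$ and $(6,6)$; since $r_1,r_2\ge 5$ are both odd with $(r_1,r_2)\ne(5,5)$, at least one of them is $\ge 7$, so we can select $(s_1,s_2)$ among those three sizes with $s_i\le r_i$. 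Proposition~\ref{direct product}(i) applied to $G_2$ with size $(s_1,s_2)$ and $H$ with size $(r_1,r_2)$ then produces a ramification structure on $G$ of size $(\max(s_1,r_1),\max(s_2,r_2))=(r_1,r_2)$, finishing the proof.
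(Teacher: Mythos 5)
Your proposal is correct and follows essentially the same route as the paper: Proposition \ref{direct product}(ii) together with Theorem \ref{case p=2} and Theorem \ref{elementary abelian} for the forward direction, and gluing via Proposition \ref{direct product}(i) for the converse, with the delicate case $G_2\cong C_2\times C_2\times C_2$ and $r_1,r_2$ both odd handled by giving $G_2$ a structure of strictly smaller size and exploiting the maximum in Proposition \ref{direct product}(i). The only (immaterial) difference is that you use one of the fixed sizes $(5,6)$, $(6,5)$, $(6,6)$ for $G_2$ there, whereas the paper takes $(r_1,r_2-1)$ after assuming $r_2\geq 7$.
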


\begin{proof}
We first assume that $(r_1, r_2)\in S(G)$. We know that (i) holds, and by Proposition \ref{direct product}(ii), we have (ii).
We next assume that $G_2 \neq 1$. Then again by Proposition \ref{direct product}(ii), $G_2$ admits a ramification structure of size $(r,s)$ for some $r\leq r_1$ and $s\leq r_2$. Then by Theorem \ref{case p=2},  $r,s \geq 5$, and furthermore $(r,s)\neq (5,5)$ if 
$|\{g^{e_2-1} \mid g\in G_2\}|=2^3$. This implies that $r_1, r_2 \geq 5$, and furthermore 
$(r_1, r_2)\neq (5,5)$ if $|\{g^{e_2-1} \mid g\in G_2\}|=2^3$.  Then the first part of (iv) follows, and (iii) follows from Theorem \ref{case p=2}. Finally if 
$G\cong C_2\times C_2 \times C_2$ then $r_1, r_2$ are not both odd, by Theorem \ref{elementary abelian}.

Conversely, assume that conditions (i)-(iv) hold. Then all $G_p$ admit a ramification structure of size $(r_1, r_2)$ unless $G_2\cong C_2 \times C_2 \times C_2$. Thus, if $G_2 \not \cong  C_2 \times C_2 \times C_2$, by Proposition \ref{direct product}(i), we conclude that $G$ admits a ramification structure of size $(r_1, r_2)$.

Finally we assume that conditions (i)-(iv) hold and $G_2=\langle x \rangle \times \langle y \rangle \times \langle z \rangle \cong C_2 \times C_2 \times C_2$. If $G=G_2$ then we already know the result, by Theorem \ref{case p=2}. Thus, we assume that $G\neq G_2$. Let $R$ be the direct product of the Sylow $p$-subgroups of $G$ for all odd primes $p$ dividing $|G|$. Then Proposition \ref{nilpotent general}(ii), together with condition (ii), implies that $R$ admits a ramification structure of size $(r_1, r_2)$.

If $r_1, r_2$ are not both odd, then $G_2$ also admits a ramification structure of size $(r_1, r_2)$. Otherwise, if both $r_1, r_2$ are odd, then we may assume that $r_2\geq 7$, and thus $G_2$ admits a ramification structure of size $(r_1, r_2-1)$, by Theorem \ref{elementary abelian}. Then in both cases, Proposition \ref{direct product}(i) implies that $G=R\times G_2$ admits a ramification structure of size $(r_1, r_2)$. This completes the proof.
\end{proof}

\section*{Acknowledgments}
I would like to thank G. A. Fern\'andez-Alcober for intense discussions and his feedbacks.

\vspace{0.2cm}

\end{document}